\g@addto@macro\bfseries{\boldmath}
\renewcommand*\env@matrix[1][*\c@MaxMatrixCols c]{%
  \hskip -\arraycolsep
  \let\@ifnextchar\new@ifnextchar
  \array{#1}}
\renewcommand{\epsilon}{\eps}
\newcommand{\ttimes}{\!\times\!}
 \newcommand{\one}{\mathds{1}}
\newcommand{\setmid}{\;:\;}
\DeclareMathOperator{\TO}{\mc L}
\DeclareMathOperator{\Isom}{Isom}
\DeclareMathOperator{\SL}{SL}
\DeclareMathOperator{\PSL}{PSL}
\DeclareMathOperator{\diag}{diag}
\DeclareMathOperator{\Tr}{Tr}
\DeclareMathOperator{\tr}{tr}
\DeclareMathOperator{\Ima}{Im}
\DeclareMathOperator{\Rea}{Re}
\newcommand\N{\mathbb{N}}
\newcommand\R{\mathbb{R}}
\newcommand\C{\mathbb{C}}
\newcommand{\h}{\mathbb{H}}
\newcommand{\mc}[1]{\mathcal #1}
\newcommand{\wh}{\widehat}
\newcommand{\eps}{\varepsilon}
\DeclareMathOperator{\Fct}{Fct}
\newcommand{\sceq}{\mathrel{\mathop:}=}
\newcommand{\bmat}[4]{\begin{bmatrix} #1&#2\\#3&#4\end{bmatrix}}
\newcommand{\textmat}[4]{\left(\begin{smallmatrix} #1&#2 \\ #3&#4
\end{smallmatrix}\right)}
\newcommand{\textbmat}[4]{\left[\begin{smallmatrix} #1&#2 \\ #3&#4
\end{smallmatrix}\right]}
\theoremstyle{plain}
\newtheorem{prop}{Proposition}[section]
\newtheorem{thm}[prop]{Theorem}
\newtheorem*{obsnn}{Observation}
\theoremstyle{definition}
\newtheorem{example}[prop]{Example}
\theoremstyle{remark}
\begin{document}

\title[Numerical resonances]{Numerical resonances for Schottky surfaces via Lagrange--Chebyshev approximation}

\author[O.F.~Bandtlow]{Oscar F.~Bandtlow}

\address{%
Oscar F.~Bandtlow\\
School of Mathematical Sciences\\
Queen Mary University of London\\
London E3 4NS\\
UK.
}
\email{o.bandtlow@qmul.ac.uk}

\author[A.\@ Pohl]{Anke Pohl}
\address{Anke Pohl, University of Bremen, Department 3 -- Mathematics, Bibliothekstr.\@
5, 28359 Bremen, Germany}
\email{apohl@uni-bremen.de}

\author[T.\@ Schick]{Torben Schick}
\address{Torben Schick, Hauburgsteinweg 47, 61476 Kronberg, Germany}
\email{torben.schick@adesso.de}

\author[A.\@ Wei{\ss}e]{Alexander Wei{\ss}e}
\address{Alexander Wei{\ss}e, Max Planck Institute for Mathematics, Vivatsgasse 7, 53111 Bonn, Germany}
\email{weisse@mpim-bonn.mpg.de}

\subjclass[2010]{Primary: 58J50, 37C30, 65F40; Secondary: 11M36, 37D35}
\keywords{resonances, Schottky surfaces, transfer operator, Lagrange--Chebyshev approximation, numerics}

\dedicatory{Dedicated to Manfred Denker on the occasion of his 75th birthday}

\begin{abstract} 
We present a numerical method to calculate resonances of Schottky surfaces based on Selberg theory, transfer operator techniques and Lagrange--Chebyshev approximation. This method is an alternative to the method based on periodic orbit expansion used previously in this context. 
\end{abstract}

\maketitle

\section{Introduction and statement of results}

In numerical experiments, Borthwick noticed that 
for some classes of Schottky surfaces the sets of resonances exhibit rather curious and unexpected patterns~\cite{Borthwick_experimental}, an observation that triggered a series of further investigations, e.g., \cite{Borthwick_Weich, Pollicott_Vytnova}. The numerics used in these articles is based on the method of periodic orbit expansion. With this article we present an alternative method, based on Lagrange--Chebyshev approximation with adapted domains.  
We briefly survey both methods and our main results in this section, referring to the next sections for details and precise definitions.

Let $X$ be a Schottky surface, i.e., a hyperbolic surface whose fundamental group~$\Gamma$ is a (Fuchsian) Schottky group. The resonances of~$X$ are the poles of the extension of the resolvent
\[
 R(s) \sceq \big(\Delta_X - s(1-s)\big)^{-1}\colon C_c^\infty(X) \to C^\infty(X),\quad \Rea s \gg 1\,,
\]
of the Laplacian~$\Delta_X$ of~$X$ to a meromorphic family $s\mapsto R(s)$ on all of~$\C$. As is well-known by now, the Hausdorff dimension~$\delta$ of the limit set~$\Lambda(X)$ of~$X$ constitutes a threshold in~$\C$ with respect to the location of the resonances of~$X$: the half-space $\{s\in\C\mid \Rea s>\delta\}$ does not contain any resonances, the point~$\delta$ itself is a resonance (with multiplicity~$1$), the \emph{critical axis}~$\{s\in\C\mid\Rea s = \delta\}$ contains no further resonance, the half-space $\{s\in\C\mid \Rea s < \delta\}$ contains infinitely many resonances, and $\{ \Rea s \mid \text{$s$ a resonance}\}$ is unbounded from below. 
Several further results on the location and distribution of the resonances for Schottky surfaces have been established, some of them very recently, see, e.g.,~\cite{GZ_upper_bounds, GZ_scattering, Bourgain_Dyatlov,Pohl_Soares, Naud_Magee}. 

The study of resonances of Schottky surfaces is not just interesting from a theoretical point of view, their properties are also of considerable interest in applications, e.g.,~\cite{BGS, Oh_Winter}. 
In spite of intensive efforts, however, many questions concerning the nature of the resonances are still wide open to date, including several very elementary ones. 
Numerical investigations of the resonance sets might lead to new insights, and are therefore clearly called for. In addition, as we will see in this paper, their investigation is also rewarding from a numerical point of view.

Both numerical methods that we will discuss crucially rely on the interpretation of resonances using dynamical zeta functions and transfer operators. The dynamical zeta function of interest to us is the Selberg zeta function~$Z_X$ of~$X$, which is an entire function given by the infinite product 
\[
 Z_X(s) = \prod_{\ell\in L(X)}\prod_{k=0}^\infty \left( 1 - e^{-(s+k)\ell} \right)
\]
for $s\in\C$ with $\Rea s\gg 1$, and by analytic continuation of this infinite product on all of~$\C$. Here, $L(X)$ denotes the primitive geodesic length spectrum of~$X$ including multiplicities. The resonances of~$X$ constitute the main bulk of the zeros of~$Z_X$ (including multiplicities); the additional zeros of~$Z_X$ are the well-understood and comparatively sparse so-called topological zeros~\cite{Patterson_Perry}. Thus, searching for the resonances of~$X$ is essentially equivalent to searching for the zeros of~$Z_X$. 

The Selberg zeta function can be represented as the Fredholm determinant of a transfer operator family~$(\TO_s)_{s\in\C}$, acting on a suitable function space~$\mc H$. Thus, for all~$s\in\C$,
\begin{equation}\label{eq:ZTO_intro}
 Z_X(s) = \det(1-\TO_s)\,.
\end{equation}
(We refer to Section~\ref{sec:def_TO} for the definition of the transfer operators~$\TO_s$.) In turn, searching for the zeros of~$Z_X$, and hence essentially for the resonances of~$X$, translates to searching for the values of the parameter~$s$ of the transfer operator~$\TO_s$ such that $\det(1-\TO_s)=0$.  

The \emph{method of periodic orbit expansion}, employed by Borthwick for the numerical approximation of resonances, takes advantage of the identity in~\eqref{eq:ZTO_intro} to deduce the series expansion
\begin{equation}\label{eq:poe_intro}
 Z_X(s) = 1+\sum_{n=1}^\infty d_n(s)\,,
\end{equation}
where $d_0(s) = 1$ and 
\begin{equation}\label{eq:dn}
 d_n(s) = - \frac1n \sum_{k=1}^n d_{n-k}(s)\Tr\TO_s^k 
\end{equation}
for $n\in\N$. The traces $\Tr\TO_s^k$ are of the form 
\begin{equation}\label{eq:trace_k}
 \Tr\TO_s^k = \sum_{\ell\in L_k(X)} \frac{e^{-s\ell}}{1-e^{-\ell}}\,,
\end{equation}
where $L_k(X)$ is a finite multiset of lengths of periodic geodesics on~$X$, with lengths growing with $k$, and $\#L_k(X)\sim c^k$ for some $c>0$. Truncating the series in~\eqref{eq:poe_intro} at $N\in\N$ leads to 
\[
 Z_N(s) = 1 + \sum_{n=1}^N d_n(s)\,,
\]
the zeros of which approximate the zeros of~$Z_X$.

Borthwick's primary goal was to study the resonances of~$X$ near the critical 
axis~$\{s\in\C\mid\Rea s=\delta\}$, for which the periodic orbit expansion method is well-suited. In addition to the rather surprising results he obtained, he also reported limitations of this method for the investigation of resonances  further away from the critical axis (e.g., resonances~$s$ with $\Rea s<0$) as well as for Schottky surfaces with Euler characteristic~$\chi(X)<-1$ 
(equivalent to $\Gamma$ having more than $2$ generators) or with small funnel widths (i.e., for $\delta$ near $1$), see~\cite{Borthwick_experimental}. The main cause of the 
limitations is the high computational cost of the recursions in~\eqref{eq:dn} as well as that of 
the sums in~\eqref{eq:trace_k}, the number of summands of which grow exponentially with~$k$ and polynomially with $-\chi(X)$, and which need to be almost completely redone for every single~$s$.

For \emph{highly symmetric} Schottky surfaces, the Venkov--Zograf factorization formulas for Selberg zeta functions can be applied to reduce the computational cost of the periodic orbit expansion method making it possible to consider, for example, some Schottky surfaces with $4$ funnels, that is, with Euler characteristic $-2$, see~\cite{Borthwick_Weich}. Qualitatively, however, the limitations remain. 

The method we present here is inspired by Nystr\"om's approach~\cite{Nystroem30} to solve Fredholm-type integral equations. The definition of the transfer operator $\TO_s$ introduced above leaves room for possible choices of the function space~$\mc H$.
While the periodic orbit expansion method is not affected by the choice of a suitable~$\mc H$, 
our method heavily depends on it; in fact, it takes advantage of this option, reconsidering the choice during investigations and adapting it to specific situations. 

The starting point of this method is a careful first choice of the function space~$\mc H$ on which we  consider the transfer operator~$\TO_s$ and approximate the functions $f\in\mc H$ using Lagrange--Chebyshev interpolation. On approximated functions, $\TO_s$ then acts as a \emph{finite} matrix $M_s$, and $\det(1-M_s)$ serves as an approximation of the 
Selberg zeta function $Z_X(s)$ at $s\in\C$. The evaluation of $\det(1-M_s)$ in turn
is facilitated by the existence of highly optimized algorithms for the calculation of determinants. Moreover, the specific structure of $M_s$ (see Section~\ref{sec:LC}) makes it possible to reuse a substantial part of previously computed values when changing~$s$.

To further improve the approach we take advantage of the possibility to change~$\mc H$ and pick a sequence of function spaces~$(\mc H_m)_{m\in\N}$ such that along any sequence~$(f_m)_{m\in\N}$ with $f_m\in\mc H_m$, $m\in\N$, the domains of the functions shrink and converge to the limit set~$\Lambda(X)$ of~$X$. By using a space of this sequence with sufficiently refined domains, we can take advantage of the geometry and dynamics of~$X$ to investigate the resonances of~$X$ over a wider range of parameters.

This method, termed \emph{domain-refined Lagrange--Chebyshev approximation}, pushes further the frontier of numerical investigations of resonances for Schottky surfaces.

\begin{obsnn}\label{thm:main}
With the method of domain-refined Lagrange--Chebyshev approximation, resonances can be calculated efficiently also further into the negative half-space $\{s\in\C\mid \Rea s<0\}$ as well as for Schottky surfaces with several generators and with small funnel widths. This method is not restricted to Schottky surfaces with additional symmetries or any other specific properties.
\end{obsnn}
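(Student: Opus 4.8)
Since the statement is an \emph{observation} about the performance of a numerical scheme rather than a closed analytic assertion, the plan is to substantiate it in two complementary layers: a theoretical layer guaranteeing that the finite-dimensional approximation $\det(1-M_s)$ converges to $Z_X(s)$ with a controllable rate, and an experimental layer exhibiting this convergence in exactly the three difficult regimes named --- deep into $\{s\in\C\mid\Rea s<0\}$, for Schottky groups $\Gamma$ with more than two generators, and for small funnel widths ($\delta$ near $1$). First I would fix the function space $\mc H$ from Section~\ref{sec:def_TO} as a space of functions holomorphic on an open set $D$ containing the interpolation set, and record the two structural inputs we may rely on: the Fredholm determinant identity $Z_X(s)=\det(1-\TO_s)$ of~\eqref{eq:ZTO_intro}, and the fact that the contracting inverse branches of the Schottky dynamics map $\overline D$ compactly into $D$, so that $\TO_s$ sends $\mc H$ into functions holomorphic on a strictly larger set $D'$ with $D\Subset D'$. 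The latter is what renders $\TO_s$ nuclear and what will drive the convergence rate.

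The heart of the argument is that rate. Interpolating $f\in\mc H$ at Chebyshev nodes and representing $\TO_s$ on the resulting finite-dimensional space produces the matrix $M_s$ of Section~\ref{sec:LC}. Because the image functions extend holomorphically to $D'$, classical Lagrange--Chebyshev error estimates give a geometric decay of the approximation in operator norm, with ratio $\rho>1$ governed by the conformal size of the Bernstein region determined by $D'$ relative to the interpolation set. I would then upgrade this to a determinant estimate using the Lipschitz dependence of Fredholm determinants on trace-class perturbations, obtaining an explicit bound
\[
 \bigl|\det(1-M_s)-Z_X(s)\bigr| \le C(s)\,\rho^{-N}\,,
\]
so that the zeros of $\det(1-M_s)$ converge to the resonances. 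This already establishes efficiency for $s$ in any fixed compact set.

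The difficult regimes are precisely those in which the constants $C(s)$ and the ratio $\rho$ degrade: as $\Rea s\to-\infty$ the weights $|g'|^{s}$ of the branch maps blow up and $\|\TO_s\|$ grows; as the number of generators increases the branch structure multiplies; and as $\delta\to1$ the Schottky disks nearly touch, the branch contraction weakens, $D'$ collapses toward the interpolation set, and $\rho\to1$. The remedy --- and the content of the observation --- is domain refinement: I would replace the single pair $(D,D')$ by a sequence of adapted spaces $(\mc H_m)_{m\in\N}$ built from the $m$-fold branch images $g_{b_1}\!\circ\cdots\circ g_{b_m}(\overline D)$, whose supports shrink toward $\Lambda(X)$. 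Each refined piece is geometrically tiny relative to its holomorphic neighborhood inside $D$, so the effective Bernstein ratio on $\mc H_m$ is restored to a value bounded away from $1$ even when $\delta$ is close to $1$ or $\Rea s$ is very negative. The key lemma to prove is therefore that after $m$-fold refinement $\TO_s$ still acts with uniformly controlled norm and a uniformly large holomorphic extension across the target parameter window.

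The hard part will be making this uniformity quantitative: one must show that the gain from refinement (larger $\rho$, tighter enclosure of $\Lambda(X)$) outpaces its cost (exponentially many branches, hence larger matrices), and in particular that accuracy is not silently lost as $\Rea s$ decreases. I expect the cleanest route is to track a single geometric quantity --- the modulus of the annular/Bernstein region carrying the holomorphic extension --- and prove it stays bounded below after refinement, uniformly in $s$ over the window of interest. With that in hand, the observation is completed on the experimental side by computing resonances for explicit three- and four-generator Schottky surfaces and for surfaces with $\delta$ near $1$, deep into $\{\Rea s<0\}$, and checking self-consistency under refinement of both $N$ and $m$; since no factorization of $Z_X$ and no symmetry of $X$ enter anywhere in the construction, the final clause of the claim follows as well.
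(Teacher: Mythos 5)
Your plan is sensible, but it is not the route the paper takes, and the difference is deliberate on the authors' part. What you call the theoretical layer --- geometric decay of the Lagrange--Chebyshev error governed by a Bernstein-type region for the holomorphic extension, upgraded to a bound on $\bigl|\det(1-M_s)-Z_X(s)\bigr|$ via continuity of Fredholm determinants in trace norm --- is precisely the content the paper \emph{postpones}: it states explicitly that validity, error estimates and convergence rates will be discussed in a forthcoming article. The only theorem-level input used here is Theorem~\ref{thm:SZF_refineddomain}: for every refinement level $n$ and every admissible family of neighborhoods $\mc E_n$, the transfer operator acts on $H(\mc E_n)$, is trace class, and satisfies $Z_X(s)=\det(1-\TO_s)$ \emph{exactly}. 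Since that theorem holds for arbitrary Schottky data, the final clause of the Observation (no symmetries or other special properties needed) follows from the construction itself, as in your argument. The quantitative claims, however, are substantiated purely experimentally, i.e.\ by your second layer alone: Figures~\ref{fig:X1010ell}, \ref{fig:X666} and~\ref{fig:Y772} reach $\Rea s>-1.1$, $\Rea s>-2.1$ and $\Rea s>-2.6$ respectively; Figure~\ref{fig:nfunnel} treats a surface with five generators (six funnels of width $10$) and a four-funnel surface of width $1$ with $\delta=0.86076$, the latter computed at refinement level $n=3$. Moreover, the ``cost versus gain'' worry you raise about exponentially many branches is answered in the paper not by a quantitative lemma but by structural features of the scheme you do not mention: by Proposition~\ref{prop:TOcoeff} each block row of the refined matrix has only $2q-1$ nonzero blocks out of $2q(2q-1)^n$, so the matrix is exponentially sparse and amenable to sparse $LU$ determinant routines; and the discretization splits into an $s$-dependent diagonal factor and a static part that is precomputed once and reused for every $s$, which is what makes the Newton search along vertical lines affordable.

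One caution on the layer you propose to add: the uniformity you aim for --- a Bernstein modulus bounded below ``uniformly in $s$'' as $\Rea s\to-\infty$ --- is unlikely to hold as stated. Domain refinement improves the \emph{geometry} (it is exactly the cure for $\delta$ near $1$ and for many generators, since the domains shrink toward $\Lambda(X)$), but it does not touch the growth of the weights $|S_{-k}'|^{s}$, nor of $|Z_X(s)|$ itself, as $\Rea s$ decreases; any honest estimate will carry $s$-dependent constants $C(s)$ growing in that direction. The paper claims only what its experiments show in a fixed window of $\Rea s$, which is presumably why the rate analysis was deferred rather than asserted.
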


The basic idea of discretizing operators by expansions in orthogonal
polynomials and approximating Fredholm determinants by matrix
determinants already has a rather long history and in principle dates back to Ritz,
Galerkin~\cite{GanderWanner12} and Nystr{\"o}m~\cite{Nystroem30}. The
review of Bornemann~\cite{Bornemann10} gives a nice overview on the
topic and discusses examples from random matrix theory, among others.
The approach was also applied successfully to integrable quantum
systems~\cite{DugaveEtal14}, where correlation functions are
calculated with the help of quantum transfer matrices. In the realm of
transfer operators it was used, e.g., in~\cite{Fraczek_thesis, Fraczek_book}, mostly applying a monomial basis, leading to new theoretical results~\cite{BFM}. Also computation schemes for transfer operators focusing on Chebyshev polynomials exist already, albeit without discussing their Fredholm determinants, e.g.,~\cite{Nielsen_thesis, Wormell, BanSlip}. The main new aspect
of our approach is the domain-refinement, which leads to a particularly
efficient basis for the discretized transfer operator. In this article, we focus on presenting the numerical method. We will discuss its validity, error estimates and convergence rates in a forthcoming article.

\section{Schottky surfaces, resonances, classical transfer operators}

\subsection{Schottky surfaces}

Schottky surfaces are those hyperbolic surfaces for which the fundamental group is a (Fuchsian) Schottky group. Equivalently, these are the geometrically finite, conformally compact, infinite-area hyperbolic surfaces without orbifold singularities or, also equivalently, the hyperbolic surfaces for which the fundamental group is geometrically finite, convex cocompact, non-cocompact and torsion-free. Every Schottky surface can be obtained by a certain geometric construction algorithm~\cite{Button}, which we will briefly recall in what follows. This algorithm constructs a fundamental domain for every Schottky surface; the side-pairing elements of the fundamental domain and their relations provide a presentation of the fundamental group of the Schottky surface. All hyperbolic surfaces arising from this construction are indeed Schottky.

\subsubsection{Preliminaries for the construction}
As model for the hyperbolic plane we use throughout the upper half-plane
\[
 \h \coloneqq \{ z\in\C \setmid \Ima z > 0\}\,,\qquad ds^2_z \coloneqq \frac{dz\,d\overline z}{(\Ima z)^2}\,,
\]
and we identify its group $\Isom^+(\h)$ of orientation-preserving Riemannian isometries with the Lie group $\PSL_2(\R) = \SL_2(\R)/\{\pm\one\}$. Here, we denote with~$\one$ the matrix $\textmat{1}{0}{0}{1} \in \SL_2(\R)$, and we denote an element in $\PSL_2(\R)$ by 
\[
 \bmat{a}{b}{c}{d} 
\]
if it is represented by the matrix~$\textmat{a}{b}{c}{d}\in\SL_2(\R)$. With all identifications in place, the action of~$\PSL_2(\R)$ on~$\h$ is given by fractional linear transformations, thus
\[
 g.z = \frac{az+b}{cz+d}
\]
for $g=\textbmat{a}{b}{c}{d}\in\PSL_2(\R)$, $z\in\h$.

The hyperbolic plane~$\h$ embeds canonically into the Riemann sphere~$\wh\C = \C\cup\{\infty\}$. In this embedding, the topological boundary~$\partial\h = \R\cup\{\infty\}$ of~$\h$ coincides with its geodesic boundary. The action of~$\PSL_2(\R)$ on~$\h$ extends holomorphically to all of~$\wh\C$. The extended action is given by  
\[
 g.z = 
 \begin{cases}
  \infty & \text{if $z=\infty, c=0$, and if $z\not=\infty, cz+d=0$,}
  \\
  \frac{a}{c} & \text{if $z=\infty, c\not=0$,}
  \\
  \frac{az+b}{cz+d} & \text{otherwise}
 \end{cases}
\]
for $g=\textbmat{a}{b}{c}{d}\in\PSL_2(\R)$ and $z\in\wh\C$. 

\subsubsection{Geometric construction of Schottky surfaces}

\begin{figure}
  {\centering \includegraphics[height=4cm]{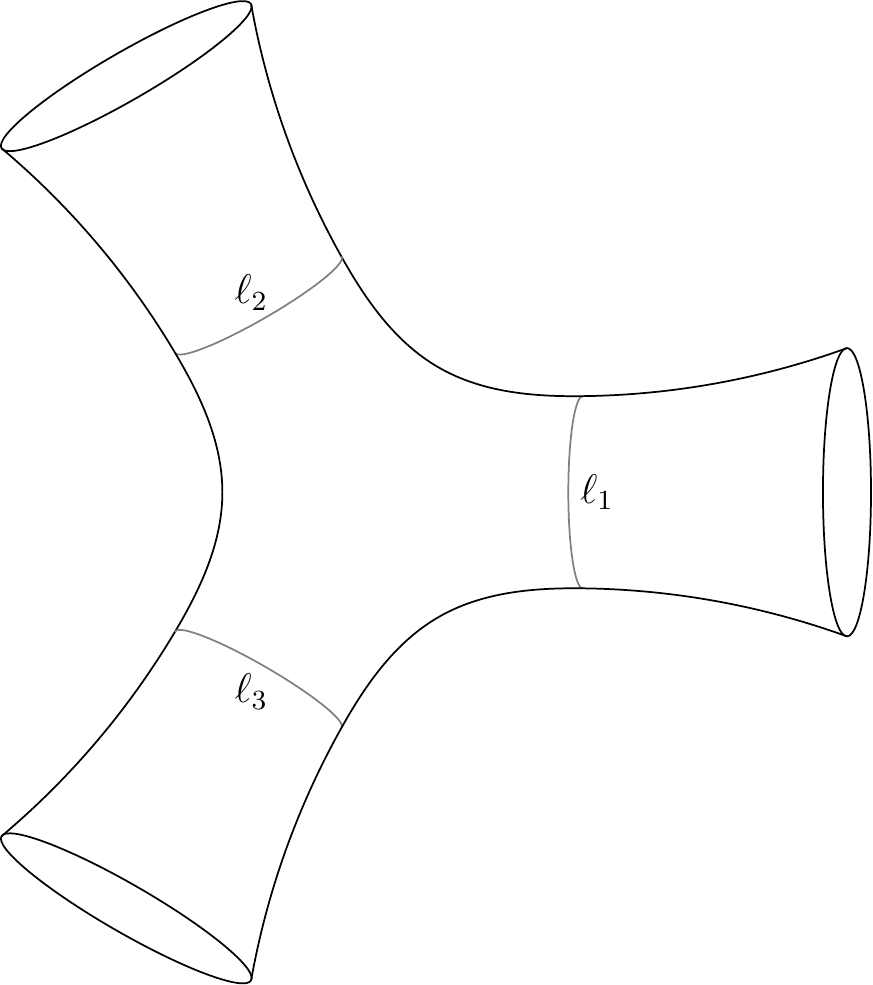}\hfill
    \includegraphics[height=4cm]{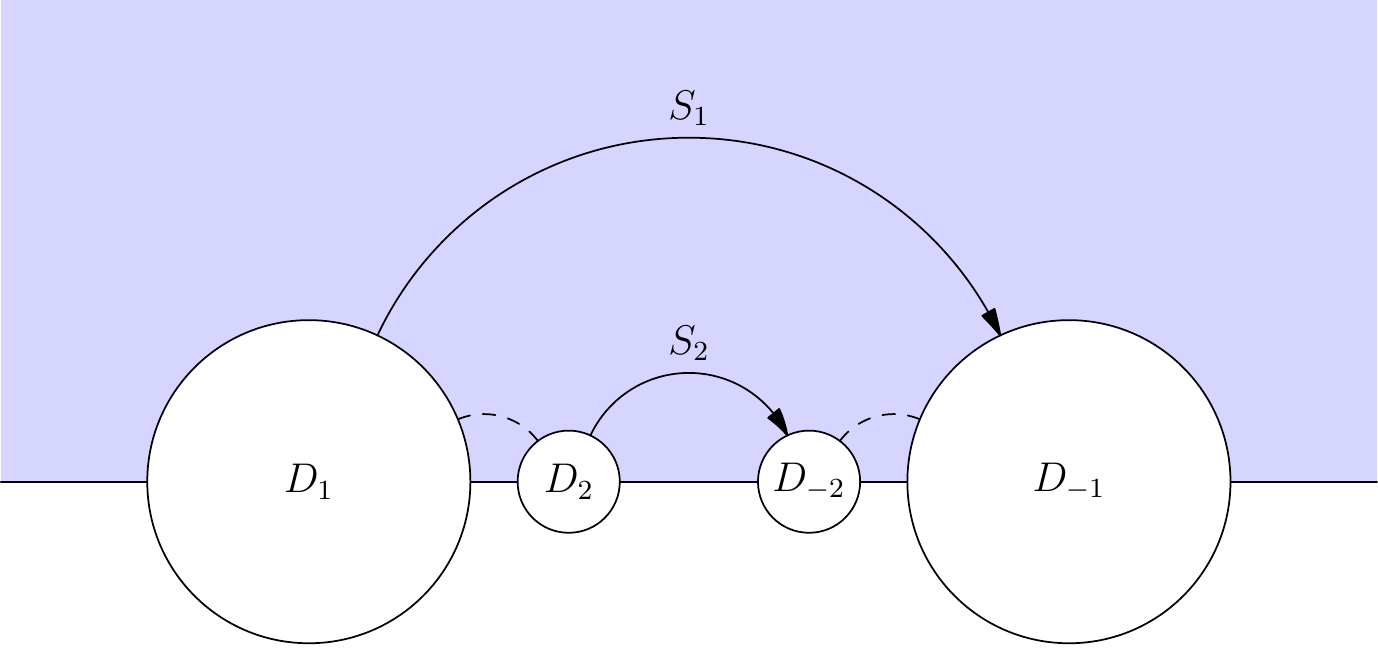} }
  \caption{Construction of a three-funnel
    surface.}\label{fig:c3fun}
\end{figure}

\begin{figure}
  {\centering \includegraphics[width=0.3\linewidth]{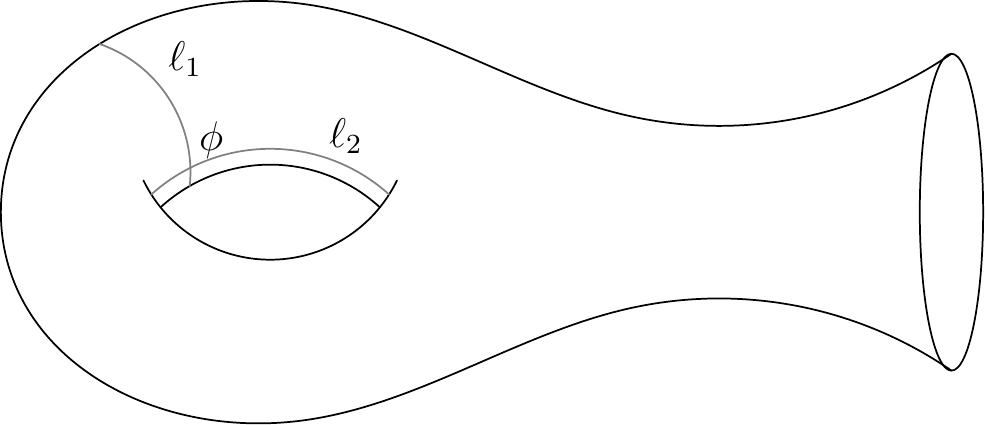}\hfill
    \includegraphics[width=0.6\linewidth]{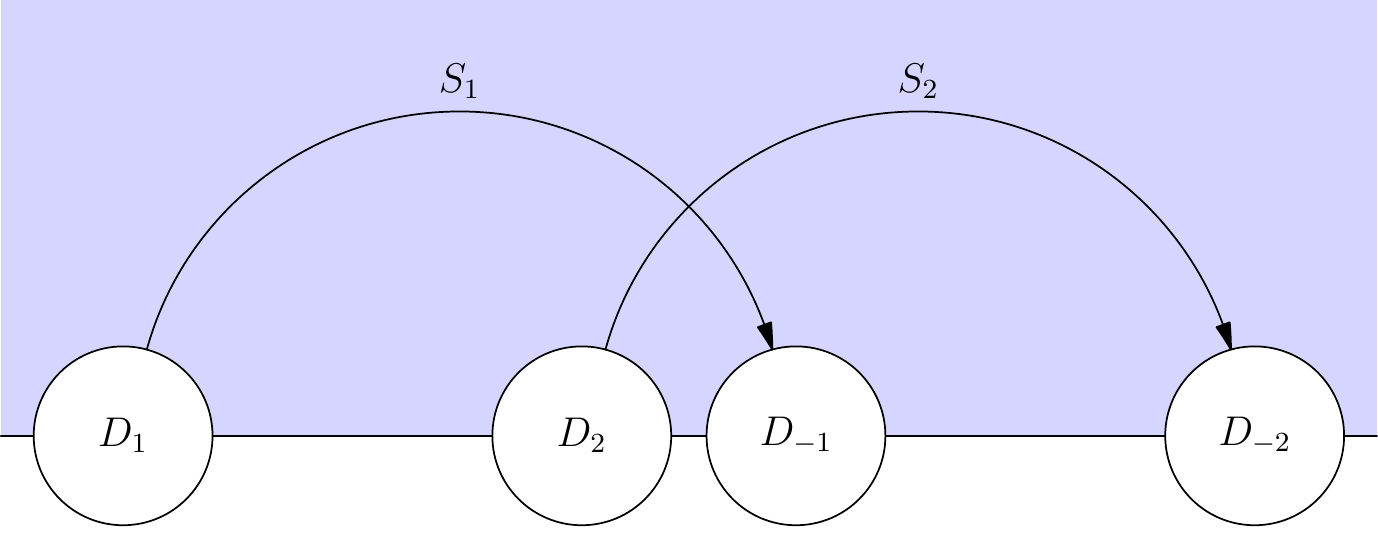} }
  \caption{Construction of a funneled torus surface.}\label{fig:funtorus}
\end{figure}

In order to construct a Schottky surface~$X$ of Euler characteristic $\chi(X)=1-q$ (note that $q\in\N$), we choose $2q$ open Euclidean disks in~$\C$ with centers in~$\R$ and pairwise disjoint closures. We further fix a pairing of these disks, indicated by indices in $I_G\coloneqq \{\pm 1,\ldots, \pm q\}$ with opposite signs, say 
\[
 D_1,D_{-1},\ldots, D_q,D_{-q}\,.
\]
We emphasize that the numeration of these disks implied by their indices is not related to their relative positions in~$\C$. For each $j\in\{1,\ldots, q\}$ we choose an element~$S_j$ in $\PSL_2(\R)$ that maps the exterior of the disk~$D_j$ to the interior of the disk~$D_{-j}$. For examples see Figures~\ref{fig:c3fun} and~\ref{fig:funtorus}. The subgroup
\[
\Gamma = \left\langle S_1,\ldots, S_q \right\rangle
\]
of~$\PSL_2(\R)$ generated by $S_1,\ldots, S_q$ is a \emph{Schottky group}, and the orbit space~$X = \Gamma\backslash\h$ is a \emph{Schottky surface} with Euler characteristic $1-q$. The subset 
\[
 \mc F \coloneqq \bigcap_{k\in I_G} \big( \h\smallsetminus D_k \big)
\]
of~$\h$ is a closed fundamental domain for~$X$ with side-pairings given by the elements~$S_1,\ldots, S_q$. Note that the set of relations among~$S_1,\ldots, S_q$ is empty.

For further use we set 
\[
 S_{-j}\coloneqq S_j^{-1}
\]
for $j\in\{1,\ldots, q\}$, and we call the tuple 
\[
 \left( q, \big(D_k\big)_{k\in I_G}, \big(S_k\big)_{k\in I_G}  \right)
\]
\emph{Schottky data} for~$X$. We remark that different Schottky data may give rise to the same Schottky surface, and indeed any given Schottky surface admits  uncountably many choices of Schottky data. We also remark that the notion of Schottky data in~\cite{APW} is slightly more general; the additional freedom allowed there for the choices of the disk family will not be needed for our purposes.

\subsection{Resonances}

The \emph{resonances} of a Schottky surface~$X$ are the poles of the meromorphic continuation of the resolvent of its Laplacian. To be more precise, we recall that the Laplace operator
\[
 \Delta_\h \coloneqq -y^2 \big(\partial_x^2 + \partial_y^2\big)
\]
on~$\h$ (here, $z=x+iy \in\h,\ x,y\in\R$) induces the Laplace operator 
\[
 \Delta_X \colon L^2(X) \to L^2(X)
\]
on~$X$. The resolvent 
\begin{equation}\label{eq:resolventfirst}
 R_X(s) \coloneqq \big( \Delta_X - s(1-s)\big)^{-1}\colon L^2(X) \to L^2(X)
\end{equation}
of~$\Delta_X$ is defined for all $s\in\C$ with $\Rea s > \tfrac12$ for which $s(1-s)$ is not an $L^2$-eigenvalue of~$\Delta_X$. As shown in~\cite{Mazzeo_Melrose,GZ_upper_bounds}, the restriction of the family of maps in~\eqref{eq:resolventfirst} to $C_c^\infty(X)\to C^\infty(X)$ extends to a meromorphic family 
\[
 R_X(s)\colon L^2_{\mathrm{comp}}(X) \to H^2_{\mathrm{loc}}(X)
\]
on all of~$\C$. The poles of~$R_X$ are the resonances of~$X$. We let $R(X)$ denote the 
multiset of resonances of~$X$ (thus, resonances are included with multiplicities, i.e., the rank of the residue of $R_X$ at the resonance).

\subsection{Selberg zeta function}

For any Schottky surface~$X$ we denote by $L(X)$ its multiset of lengths of primitive periodic geodesics (thus, the prime geodesic length spectrum including multiplicities). Further we denote by~$\Lambda(X)$ the limit set of~$X$. We recall that $\Lambda(X)$ is the set of limit points (i.e., accumulation points) in the Riemann sphere~$\wh\C$ of the orbit $\Gamma.z$ for some (and hence any) $z\in\h$ and any realization~$\Gamma$ of the fundamental group of~$X$ in~$\PSL_2(\R)$. As is well-known, $\Lambda(X)$ is a subset of $\partial\h = \R\cup\{\infty\}$. We let $\delta=\delta(X)$ denote the Hausdorff dimension of~$\Lambda(X)$, which for Schottky surfaces can be any value in~$[0,1)$.

The \emph{Selberg zeta function}~$Z_X$ of~$X$ is given by the infinite product
\begin{equation}\label{eq:szf}
 Z_X(s) = \prod_{\ell\in L(X)} \prod_{k=0}^\infty \left( 1 - e^{-(s+k)\ell} \right)
\end{equation}
for $s\in\C$ with $\Rea s > \delta$ (where this infinite product converges compactly), and beyond this range by the holomorphic continuation of~\eqref{eq:szf} to all of~$\C$. The multiset of zeros of~$Z_X$ consists of the multiset~$R(X)$ of resonances of~$X$ and the \emph{topological zeros}. The latter are well-understood: they are located at~$-\N_0$ with well-known multiplicities. For proofs and more details we refer to the original articles~\cite{Patterson_Perry} as well as~\cite{Borthwick_Judge_Perry05}. (We remark that the relation between resonances and topological zeros of Selberg zeta functions is reminiscent of the relation between trivial and nontrivial zeros of the Riemann zeta function.)

\subsection{Transfer operators}\label{sec:def_TO}

The choice of Schottky data 
\[
 \mc S\coloneqq \left( q, \big(D_k\big)_{k\in I_G}, \big(S_k\big)_{k\in I_G} \right)
\]
for a Schottky surface~$X$ allows us to construct a Ruelle-type transfer operator family whose underlying discretization of the geodesic flow on~$X$ arises from a Koebe--Morse coding based on the fundamental domain induced by~$\mc S$. We refer to \cite{BorthwickBook2nd} for  details and proofs, and present here only the resulting transfer operators and their properties. 

To simplify notation, for $s\in\C$, any subset~$U\subseteq\R$, any function~$f\colon U\to\C$ and any element~$g=\textbmat{a}{b}{c}{d}\in\nobreak\PSL_2(\R)$ we set
\[
 \tau_s(g^{-1})f(x) \sceq \big( g'(x) \big)^s f(g.x) = |cx+d|^{2s}f\left(\frac{ax+b}{cx+d}\right)\qquad \text{($x\in U$)}
\]
whenever this is well-defined. We will also use holomorphic extensions of this definition to functions defined on open subsets of~$\C$ (e.g., the disks~$D_k$, $k\in I_G$). We omit the discussion of the possible choices of holomorphic extensions, in particular of the choices for the logarithm, which heavily depend on the Schottky data and on the required combinations of functions~$f$ and group elements~$g$ in the Schottky group. None of the stated results depend on these choices. We continue to denote the holomorphic extensions by~$\tau_s$ or, more precisely, by~$\tau_s(g^{-1})f$.

The \emph{transfer operator} $\TO_s$ with parameter $s\in\C$ associated to $\mc S$ is defined formally by 
\begin{equation}\label{eq:def_TO1}
 \TO_s \coloneqq \sum_{k\in I_G} 1_{D_k} \cdot \sum_{\substack{j\in I_G\\ j\not=-k}} \tau_s(S_j)\,,
\end{equation}
where $1_{D_k}$ denotes the characteristic function of~$D_k$. 

The choice of function space on which to consider the transfer operator~$\TO_s$ is guided by the required applications and may be a subtle task. For Selberg zeta functions there are several good choices of which we present a few in what follows. In Section~\ref{sec:domainrefined} below we will present some more spaces crucial for our investigations. Throughout, we will denote all transfer operators by~$\TO_s$, independently of the function space on which they are considered to act. 

One common choice, widely used in the study of resonances of Schottky surfaces via transfer operators \cite{GLZ, BorthwickBook2nd}, is the \emph{Hilbert--Bergman space} given by 
\begin{equation}\label{eq:HBspace}
 \mc H \coloneqq \bigoplus_{k\in I_G} \mc H(D_k)\,,
\end{equation}
where 
\[
 \mc H(D_k) \coloneqq \left\{ \text{$f\colon D_k\to\C$ holomorphic} \setmid \int_{D_k} |f(z)|^2\,d\lambda(z) < \infty \right\}
\]
is the Hilbert space of holomorphic square-integrable functions on~$D_k$, endowed with the standard $L^2$-inner product. Here, $d\lambda$ denotes the Lebesgue measure on~$\C$.

Another common choice, already used by Ruelle~\cite{Ruelle} and Mayer~\cite{Mayer_gauss,Mayer_thermo} in their seminal investigations of Selberg zeta functions via transfer operators, is the \emph{disk algebra}~$\mc B$ associated to~$\mc S$. This is the Banach space
\begin{equation}\label{eq:DAspace}
 \mc B = \bigoplus_{k\in I_G} A_\infty(D_k)\,,
\end{equation}
where 
\[
 A_\infty(D_k) \coloneqq \left\{ \text{$f\colon D_k\to\C$ holomorphic} \setmid \text{$f$ extends continuously to $\overline D_k$}\right\}
\]
is the Banach space of holomorphic functions on~$D_k$ that are restrictions of continuous functions on the closure~$\overline D_k$ of~$D_k$, endowed with the supremum norm.

On the spaces in~\eqref{eq:HBspace} or in~\eqref{eq:DAspace} (as well as many other spaces),  the transfer operator $\TO_s$ is a well-defined self-map, a nuclear operator of order~$0$ (thus, trace class on~$\mc H$) and its Fredholm determinant equals the Selberg zeta function of~$X$, thus
\[
 Z_X(s) = \det(1-\TO_s)
\]
for all $s\in\C$ (see, e.g., \cite{BorthwickBook2nd} and~\cite{FP_szf}). Therefore, with either of these spaces as domain, the transfer operator~$\TO_s$ has an eigenfunction with eigenvalue~$1$ if and only if $s$ is a zero of $Z_X$. See, e.g.,~\cite{GGK}.

\section{Domain-refined transfer operators}\label{sec:domainrefined}

Throughout let $X$ be a Schottky surface, 
\[
 \mc S = \left( q, \big(D_k\big)_{k\in I_G}, \big(S_k\big)_{k\in I_G}\right)
\]
a choice of Schottky data for~$X$, and $(\TO_s)_{s\in\C}$ the associated transfer operator family. In Section~\ref{sec:def_TO} we have exhibited two classes of function spaces that are by now ``classical'' domains for the transfer operator~$\TO_s$. Both spaces consist of functions on the disks~$D_k$, $k\in I_G$. In this section we will construct spaces of functions with smaller domains, a step that will be crucial for our numerical method. 

The construction of smaller domains is guided by the following intuition: in order to calculate resonances of~$X$ we want to use the Selberg zeta function~$Z_X$ and its 
representation $\det(1-\TO_s)$ as a Fredholm determinant by a transfer operator family on a suitable function space, say~$H$. As can be seen from the infinite product in~\eqref{eq:szf}, the function~$Z_X$ is determined by the \emph{periodic} geodesics on~$X$. Given any such periodic geodesic~$\wh\gamma$ on~$X$ and any geodesic~$\gamma$ on~$\h$ that represents~$\wh\gamma$, the two (time-)endpoints $\gamma(+\infty)$ and $\gamma(-\infty)$ of~$\gamma$ are in the limit set~$\Lambda(X)$ of~$X$. Intuitively, the function space~$H$ should consist of functions whose domains enclose~$\Lambda(X)$ and stay ``near''~$\Lambda(X)$, 
the nearer to~$\Lambda(X)$ the better the numerics. For the choice of~$H$ we need to guarantee that the transfer operators~$\TO_s$, $s\in\C$, define endomorphisms of~$H$ and that the relation $Z_X=\det(1-\TO_s)$ holds. 
To that end we will start with the disks~$D_k$, $k\in I_G$, as domains for the functions and then use the iterated action of the elements~$S_k$, $k\in I_G$, to obtain a sequence of refined domains 
converging to~$\Lambda(X)$. 

We first restrict the discussion of the domains to~$\R$ and present the shrinking algorithm on the level of real intervals. We then enlarge the shrunken intervals to 
domains in~$\C$. The construction thus allows for an additional degree of freedom which we will exploit for the construction of suitable function spaces. Figure~\ref{fig:refinement} gives an illustration of the iterated refinement procedure that we describe in what follows. 

\begin{figure}[H]
  {\centering \includegraphics[width=.6\textwidth]{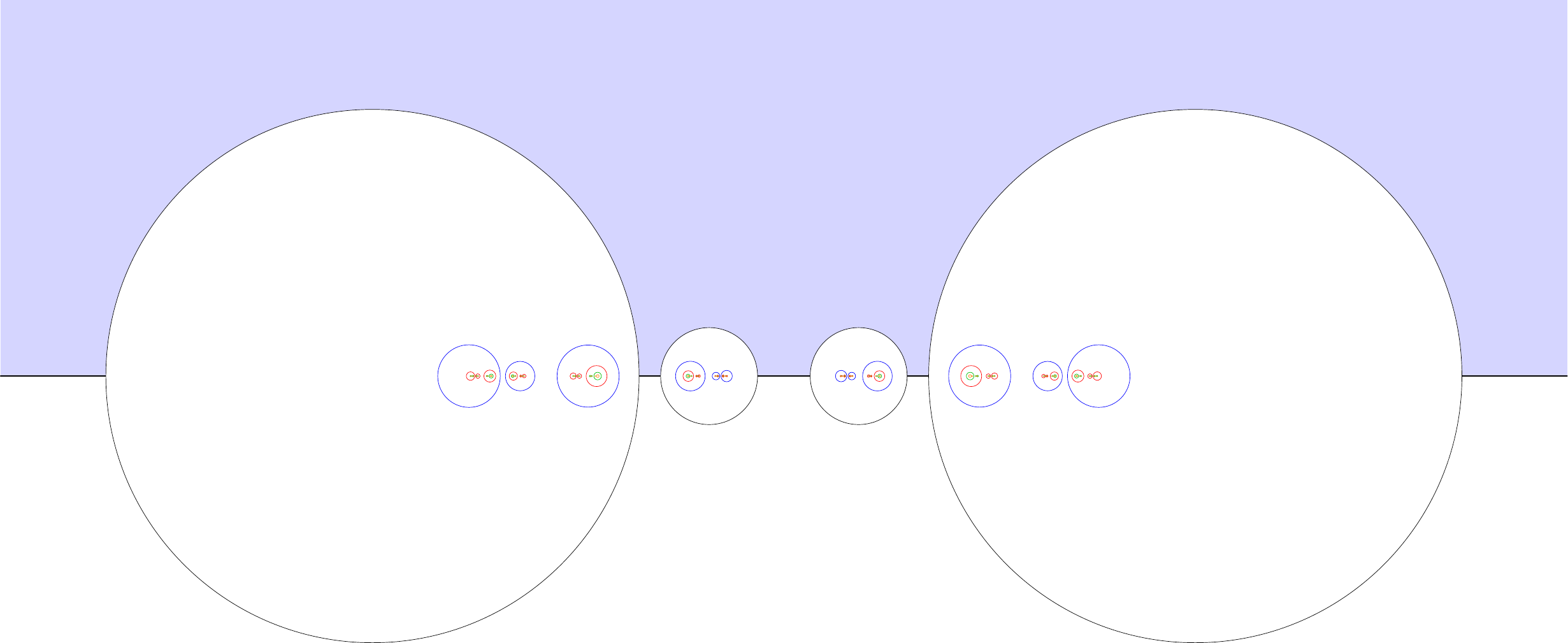}}
  \caption{Recursive subdivision (for simplicity drawn with disks).}\label{fig:refinement}
\end{figure}

For $k\in I_G$ we let 
\[
 I_k \coloneqq D_k\cap \R
\]
denote the interval in~$\R$ that is enclosed by the disk~$D_k$, and we restrict the transfer operator in~\eqref{eq:def_TO1} to these intervals. Thus, again initially only formally,
\begin{equation}\label{eq:TO_basic}
 \TO_s = \sum_{k\in I_G} 1_{I_k} \cdot \sum_{j\in I_G\smallsetminus\{-k\}} \tau_s(S_j)\,.
\end{equation}
For each $n\in\N_0$ we let $\mc W_n$ denote the set of tuples $(w_1,\ldots, w_n,\ell) \in I_G^{n+1}$ such that $w_k\not=-w_{k-1}$ for all $k\in\{2,\ldots,n\}$ and $\ell\not=w_n$. For each $w=(w_1,\ldots, w_n,\ell)\in\mc W_n$ we set
\[
 I_w \coloneqq S_{w_1}S_{w_2}\cdots S_{w_n}.I_\ell
\]
and we let 
\[
 \Fct(I_w;\C) \coloneqq\{ f_w\colon I_w\to\C\}
\]
denote the space of complex-valued functions on the interval~$I_w$. Further we let 
\[
 \mc I_n \coloneqq \{ I_w \setmid w\in\mc W_n\}
\]
denote the set of \emph{intervals of refinement level~$n$} and we set 
\begin{equation}\label{eq:def_Fct}
 \Fct(\mc I_n;\C) \coloneqq \bigoplus_{w\in\mc W_n} \Fct(I_w;\C)\,.
\end{equation}
The following proposition shows that we may consider the transfer operator~$\TO_s$ as an operator on~$\Fct(\mc I_n;\C)$. We omit the straightforward proof.

\begin{prop}\label{prop:TOmatrix}
For each $n\in\N$ and $s\in\C$ the transfer operator $\TO_s$ in~\eqref{eq:TO_basic} defines a self-map on $\Fct(\mc I_n;\C)$. 
\end{prop}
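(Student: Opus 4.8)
The plan is to verify directly that, for $F=(f_w)_{w\in\mc W_n}\in\Fct(\mc I_n;\C)$, the function $\TO_s F$ is again a well-defined element of $\Fct(\mc I_n;\C)$; since $\Fct(I_w;\C)$ consists of \emph{all} functions on $I_w$, with no regularity to preserve, the entire content is the bookkeeping of domains, governed by the admissibility conditions defining $\mc W_n$. The only geometric input needed is the elementary consequence of the Schottky property that, for $a,b\in I_G$ with $a\neq b$, one has $I_b\subseteq\Ext(D_a)$ (disjoint closures), whence
\[
 S_a.I_b\subseteq \Int(D_{-a})\cap\R=I_{-a}
\]
because $S_a$ maps $\Ext(D_a)$ onto $\Int(D_{-a})$. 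I would record this as the single lemma from which everything follows.

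First I would establish the nesting $I_v\subseteq I_{-v_1}$ for every $v=(v_1,\dots,v_n,\ell)\in\mc W_n$ with $n\ge 1$, by induction on $n$: writing $I_v=S_{v_1}.I_{v'}$ with $v'=(v_2,\dots,v_n,\ell)\in\mc W_{n-1}$, the induction hypothesis gives $I_{v'}\subseteq I_{-v_2}$, and since $-v_2\neq v_1$ (this is exactly the admissibility condition $v_2\neq -v_1$), the lemma yields $S_{v_1}.I_{-v_2}\subseteq I_{-v_1}$; the base case $n=1$ is the lemma itself, using $\ell\neq v_1$. As a consequence, the level-$n$ intervals are separated by the pairwise disjoint level-$0$ intervals: for $x\in I_v$ the only surviving summand of $\TO_s=\sum_{k\in I_G}1_{I_k}\sum_{j\neq -k}\tau_s(S_j)$ is the one with $k=-v_1$, so that on $I_v$ the operator reduces to $\sum_{j\neq v_1}\tau_s(S_j)$.

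Next I would set up a truncation map $\pi\colon\mc W_{n+1}\to\mc W_n$ by $(w_1,\dots,w_{n+1},\ell)\mapsto(w_1,\dots,w_n,-w_{n+1})$ and check two things: that $\pi(w)\in\mc W_n$ (the non-cancellation conditions for $k\le n$ are inherited, and the new endpoint condition $-w_{n+1}\neq w_n$ is precisely the old condition $w_{n+1}\neq -w_n$), and that $I_w\subseteq I_{\pi(w)}$ (apply the lemma to the innermost factor $S_{w_{n+1}}.I_\ell\subseteq I_{-w_{n+1}}$ and push forward by $S_{w_1}\cdots S_{w_n}$). With this in hand the core computation is short: for $v=(v_1,\dots,v_n,\ell)\in\mc W_n$ and $j\neq v_1$, the operator $\tau_s(S_j)$ evaluates $F$ at $S_{-j}.x=S_j^{-1}.x$, and
\[
 S_{-j}.I_v=S_{-j}S_{v_1}\cdots S_{v_n}.I_\ell=I_{(-j,v_1,\dots,v_n,\ell)},
\]
where the prepended word $(-j,v_1,\dots,v_n,\ell)$ belongs to $\mc W_{n+1}$ precisely because $j\neq v_1$ (the remaining conditions being inherited). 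Applying $\pi$, this level-$(n+1)$ interval sits inside the level-$n$ interval $I_{(-j,v_1,\dots,v_{n-1},-v_n)}$, so $F(S_{-j}.x)$ is the value of the single component $f_{(-j,v_1,\dots,v_{n-1},-v_n)}$ of $F$ there. Summing over the finitely many admissible $j$ exhibits the $I_v$-component of $\TO_s F$ as a genuine function $I_v\to\C$, and letting $v$ range over $\mc W_n$ produces an element of $\Fct(\mc I_n;\C)$; the weights $\big(S_{-j}'\big)^s$ are honest scalar functions on the bounded real interval $I_v$ (which avoids the pole of $S_{-j}$), so no choice of branch is involved.

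The argument carries essentially no analytic difficulty; the one place demanding care --- and the step I would treat as the crux --- is the index bookkeeping in the truncation map, i.e.\ verifying that prepending $-j$ and then discarding the innermost generator sends $v$ to a word that is again admissible and whose interval contains $S_{-j}.I_v$. The conditions defining $\mc W_n$ (no immediate cancellation $w_k\neq -w_{k-1}$, and $\ell\neq w_n$) are exactly engineered so that each nesting step $S_a.I_b\subseteq I_{-a}$ is licensed, and the proposition amounts to observing that these conditions are preserved under the prepend-and-truncate operation; I would check the boundary case $n=1$ separately, where $\pi(-j,v_1,\ell)=(-j,-v_1)$ and admissibility reduces to $v_1\neq j$.
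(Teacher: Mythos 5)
Your proof is correct. The paper omits the proof of this proposition as ``straightforward,'' and your argument --- the nesting lemma $S_a.I_b\subseteq I_{-a}$ for $a\neq b$, the induction giving $I_v\subseteq I_{-v_1}$, and the prepend-and-truncate bookkeeping on words --- is exactly the intended computation; indeed it simultaneously rederives the explicit matrix-coefficient formula stated (also without proof) in Proposition~\ref{prop:TOcoeff}(ii).
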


Let $n\in\N$, $s\in\C$ and consider the transfer operator $\TO_s$ as an operator on $\Fct(\mc I_n;\C)$. The definition of~$\Fct(\mc I_n;\C)$ in~\eqref{eq:def_Fct} as a direct sum of the function spaces~$\Fct(I_w;\C)$, $w\in\mc W_n$, yields an identification of~$\TO_s$ with a matrix
\begin{equation}\label{eq:reprmatrix}
 \left( \TO_{s,v,w}\right)_{v,w\in\mc W_n}
\end{equation}
where the \emph{matrix coefficient} $\TO_{s,v,w}$ for $v,w\in\mc W_n$ is the unique operator 
\[
 \TO_{s,v,w}\colon \Fct(I_w;\C) \to \Fct(I_v;\C)
\]
such that for all $f,\tilde f\in\Fct(\mc I_n;\C)=\bigoplus\Fct(I_w;\C)$ with 
\[
 f = \bigoplus_{w\in\mc W_n}f_w, \qquad \tilde f = \bigoplus_{w\in\mc W_n}\tilde f_w
\]
and $\TO_s f = \tilde f$ we have
\[
 \tilde f_v = \sum_{w\in\mc W_n} \TO_{s,v,w}f_w\,.
\]
A straightforward calculation allows us to deduce the following explicit formulas for the matrix coefficients.

\begin{prop}\label{prop:TOcoeff} Let $s\in\C$.
\begin{enumerate}[label=(\roman*),font=\normalfont]
\item For $n=0$ and $v,w\in\mc W_0 = I_G$ the matrix coefficient~$\TO_{s,v,w}$ is
\[
 \TO_{s,v,w} =
 \begin{cases}
\tau_s(S_w) & \text{if $w\not=-v$}
\\
0 & \text{otherwise.}
 \end{cases}
\]
\item For $n\geq 1$ and $v=(v_1,\ldots, v_n,\ell_v), w=(w_1,\ldots, w_n, \ell_w)\in\mc W_n$ the matrix coefficient~$\TO_{s,v,w}$ is 
\[
 \TO_{s,v,w} = 
 \begin{cases}
  \tau_s(S_{-w_1}) & \text{if $w=(w_1,v_1,\ldots,v_{n-1},-v_n)$}
  \\
  0 & \text{otherwise.}
 \end{cases}
\]
\end{enumerate}
\end{prop}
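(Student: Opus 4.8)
The plan is to compute the action of $\TO_s$ on the direct sum $\Fct(\mc I_n;\C)$ block by block: for each refined interval $I_v$ I would determine, for every inverse branch $S_{-j}$ occurring in \eqref{eq:TO_basic}, into which single block $I_w$ that branch sends $I_v$. The whole argument rests on one elementary geometric fact, one nesting statement proved by induction, and a bookkeeping step that collapses a word of length $n+1$ back to a refined interval of level $n$.

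First I would record how the side-pairing elements move the disks. Since $S_j$ maps the exterior of $D_j$ onto the interior of $D_{-j}$ and is a Möbius transformation (hence a homeomorphism of $\wh\C$ carrying the two complementary regions of $\partial D_j$ to the two complementary regions of $\partial D_{-j}$), it also maps the interior of $D_j$ onto the exterior of $D_{-j}$; applying this to $S_{-j}=S_j^{-1}$ shows that $S_{-j}$ sends the exterior of $D_{-j}$ into the interior of $D_j$. Because the closures of the disks are pairwise disjoint, $I_b$ lies in the exterior of $D_a$ whenever $a\neq b$, so $S_a.I_b\subseteq I_{-a}$ for $a\neq b$. A short induction on $n$, using the admissibility condition $w_2\neq -w_1$ at each step, then yields the nesting $I_w\subseteq I_{-w_1}$ for every $w=(w_1,\dots,w_n,\ell)\in\mc W_n$: a refined interval remembers, in the sign-flipped first letter, the level-$0$ interval containing it.

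Next I would evaluate $\TO_s f$, for $f=\bigoplus_w f_w$, on a point $x\in I_v$ with $v=(v_1,\dots,v_n,\ell_v)$, $n\geq 1$. By the nesting, $x$ lies in the single level-$0$ interval $I_{-v_1}$, so only the summand $k=-v_1$ of \eqref{eq:TO_basic} contributes and the inner sum runs over $j\neq v_1$. For such $j$ the point $S_{-j}.x$ lies in the interior of $D_j$, and I claim it lands in exactly one refined interval, namely $I_w$ with $w=(-j,v_1,\dots,v_{n-1},-v_n)$. Indeed, writing $x=S_{v_1}\cdots S_{v_n}.y$ with $y\in I_{\ell_v}$, one has $S_{-j}.x=S_{-j}S_{v_1}\cdots S_{v_n}.y$, and since $\ell_v\neq v_n$ the nesting gives $S_{v_n}.I_{\ell_v}\subseteq I_{-v_n}$; hence $S_{-j}.I_v\subseteq S_{-j}S_{v_1}\cdots S_{v_{n-1}}.I_{-v_n}=I_w$. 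One checks directly that this $w$ is admissible: the condition $w_2=v_1\neq -w_1$ is exactly $j\neq v_1$, while the remaining interior conditions $v_{k-1}\neq -v_{k-2}$ and the terminal condition $-v_n\neq v_{n-1}$ are precisely the admissibility of $v$. Setting $j=-w_1$ turns the corresponding term into $\tau_s(S_{-w_1})f_w$, so that $(\TO_s f)_v=\sum_{w_1\neq -v_1}\tau_s(S_{-w_1})f_{(w_1,v_1,\dots,v_{n-1},-v_n)}$, which is the formula in (ii) together with the vanishing of $\TO_{s,v,w}$ for all other $w$. The base case $n=0$ is the same computation in which $x$ already lies in the level-$0$ interval $I_v$, so no generator has to be absorbed and the branch $S_{-j}$ sends $x$ directly into $I_j$; reading off $w=j$ gives $\TO_{s,v,w}=\tau_s(S_w)$ for $w\neq -v$, which is (i).

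The only genuine subtlety is the bookkeeping step. A length-$(n+1)$ word applied to a level-$0$ interval must be recognized as a level-$n$ refined interval, and this is exactly where the nesting $S_{v_n}.I_{\ell_v}\subseteq I_{-v_n}$ is used to absorb the final generator. One must also confirm that $S_{-j}.I_v$ falls inside a single block $I_w$ rather than straddling several, so that the direct-sum decomposition is respected and each matrix coefficient is well defined; this is guaranteed by the disjointness of the disk closures together with the nesting lemma. Everything else is a direct substitution into the definition of $\tau_s$ and a verification that the produced index lies in $\mc W_n$.
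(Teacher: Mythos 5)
Your proof is correct and amounts to precisely the ``straightforward calculation'' that the paper omits (Proposition~\ref{prop:TOcoeff} is stated there without proof): the key ingredients---the elementary mapping property $S_a.I_b\subseteq I_{-a}$ for $a\neq b$, the induced nesting $I_w\subseteq I_{-w_1}$ which singles out the active characteristic function in~\eqref{eq:TO_basic}, and the absorption of the final generator via $S_{v_n}.I_{\ell_v}\subseteq I_{-v_n}$ together with the admissibility check for the resulting index $w=(w_1,v_1,\dots,v_{n-1},-v_n)$---are exactly the steps the intended calculation requires. No gaps.
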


With increasing refinement level~$n$, the matrix in~\eqref{eq:reprmatrix} representing the transfer operator $\TO_s\colon\Fct(\mc I_n;\C) \to \Fct(\mc I_n;\C)$ becomes sparse with exponentially increasing sparsity.

\begin{example}
We consider a Schottky surface with two generators, say $S_1$ and $S_2$. For refinement level~$0$ the transfer operator~$\TO_s$ is identified with  the matrix
\begin{equation}\label{eq:refine0}
  \begin{pmatrix}
  \tau_s(S_{-2}) & \tau_s(S_{-1}) & \tau_s(S_1) & 0
  \\
  \tau_s(S_{-2}) & \tau_s(S_{-1}) & 0 & \tau_s(S_2)
  \\
  \tau_s(S_{-2}) & 0 & \tau_s(S_1) & \tau_s(S_2)
  \\
  0 & \tau_s(S_{-1}) & \tau_s(S_1) & \tau_s(S_2)
 \end{pmatrix}\,,
\end{equation}
acting on function vectors of the form 
\[
 \begin{pmatrix}
  f_{-2} \colon I_{-2} \to \C
  \\
  f_{-1} \colon I_{-1} \to \C
  \\
  f_1 \colon I_1 \to \C
  \\
  f_2 \colon I_2 \to \C
 \end{pmatrix}\,.
\]
For refinement level~$1$, the transfer operator~$\TO_s$ is identified with the matrix
\begin{small}\setlength{\arraycolsep}{2pt}
\begin{align}\label{eq:refine1}
\begin{pmatrix}[ccc|ccc|ccc|ccc]
   0 & S_{-2} & 0 & S_{-1} & 0 & 0 & S_1 & 0 & 0 & 0 & 0 & 0
   \\
   0 & S_{-2} & 0 & S_{-1} & 0 & 0 & S_1 & 0 & 0 & 0 & 0 & 0
   \\
   0 & S_{-2} & 0 & S_{-1} & 0 & 0 & S_1 & 0 & 0 & 0 & 0 & 0
   \\\hline
   0 & 0 & S_{-2} & 0 & S_{-1} & 0 & 0 & 0 & 0 & 0 & 0 & S_2
   \\
   0 & 0 & S_{-2} & 0 & S_{-1} & 0 & 0 & 0 & 0 & 0 & 0 & S_2
   \\
   0 & 0 & S_{-2} & 0 & S_{-1} & 0 & 0 & 0 & 0 & 0 & 0 & S_2
   \\\hline
   S_{-2} & 0 & 0 & 0 & 0 & 0 & 0 & S_1 & 0 & S_2 & 0 & 0
   \\
   S_{-2} & 0 & 0 & 0 & 0 & 0 & 0 & S_1 & 0 & S_2 & 0 & 0
   \\
   S_{-2} & 0 & 0 & 0 & 0 & 0 & 0 & S_1 & 0 & S_2 & 0 & 0
   \\\hline
   0 & 0 & 0 & 0 & 0 & S_{-1} & 0 & 0 & S_1 & 0 & S_2 & 0
   \\
   0 & 0 & 0 & 0 & 0 & S_{-1} & 0 & 0 & S_1 & 0 & S_2 & 0
   \\
   0 & 0 & 0 & 0 & 0 & S_{-1} & 0 & 0 & S_1 & 0 & S_2 & 0
\end{pmatrix}\,,
\end{align}
\end{small}
acting on function vectors of the form 
\[
(f_{2,1}, f_{2,-2}, f_{2,-1}, f_{1,-2}, f_{1,-1}, f_{1,2}, f_{-1,-2}, f_{-1,1}, f_{-1,2}, f_{-2,1}, f_{-2,2}, f_{-2,-1})^\top 
\]
with 
\[
 f_{a,b}\colon S_{a}.I_b\to \C\,.
\]
In~\eqref{eq:refine1}, each entry of the form~$S_j$ indicates the operator~$\tau_s(S_j)$. The matrix representation of~$\TO_s$ in~\eqref{eq:refine1} shows that each entry in~\eqref{eq:refine0} is changed for a \mbox{$3\ttimes 3$-}matrix containing mostly zeros. 
\end{example}

Let $n\in\N$, $s\in\C$. We now thicken the intervals in~$\mc I_n$ to a suitable family of domains in~$\C$ so that we can find a good function space~$H$ on which the transfer operators~$\TO_s$ ($s\in\C$) act and satisfy the relation $Z_X(s) = \det(1-\TO_s)$. 

To that end we call, for any $n\in\N_0$, a collection~$\mc E_n \coloneqq \{ E_w \setmid w\in\mc W_n\}$ of subsets of~$\C$ a \emph{family of admissible neighborhoods for~$\mc I_n$} if for each $w\in \mc W_n$ the set~$E_w$ is an open, bounded and convex subset of~$\C$ with $I_w\subseteq E_w$, the elements of~$\mc E_n$ are pairwise disjoint, and 
\begin{itemize}
 \item in case that $n=0$, we have 
 \[
 S_w^{-1}.\overline E_v \subseteq E_w
 \]
 whenever $w\not=-v$, and
\item if $n\geq 1$, we have
\[
 S_{-w_1}^{-1}.\overline E_v \subseteq E_w
\]
whenever $w=(w_1,v_1,\ldots, v_{n-1},-v_n)$.
\end{itemize}
For any open subset~$D$ of~$\C$ we let~$H(D)$ denote an ``admissible'' Banach or Hilbert space of functions~$D\to\C$. Admissible spaces include, e.g., the Hilbert--Bergman spaces and the disk algebras from Section~\ref{sec:def_TO} as well as Hilbert--Hardy spaces. For a wider-ranging notion of admissibility see~\cite{Bandtlow_Jenkinson}. In the forthcoming article in which we will discuss the validity, error estimates and convergence rates of the method of domain-refined 
Lagrange--Chebyshev approximation we will use Hilbert--Hardy spaces over ellipses. For any family~$\mc E_n$ of admissible neighborhoods of~$\mc I_n$ we set 
\[
 H(\mc E_n) \coloneqq \bigoplus_{w\in \mc W_n} H(E_w)\,.
\]

\clearpage 

\begin{thm}\label{thm:SZF_refineddomain} Let $n\in\N_0$.
\begin{enumerate}[label=(\roman*),font=\normalfont]
\item\label{rdi} There exists an admissible family of neighborhoods~$\mc E_n$ of~$\mc I_n$. 
\item\label{rdii} For every admissible family~$\mc E_n$ of neighborhoods of~$\mc I_n$, the transfer operator~$\TO_s$ extends to an operator~$\TO_s$ on~$H(\mc E_n)$. 
\item\label{rdiii} The operator~$\TO_s$ on~$H(\mc E_n)$ is trace class (or nuclear of order~$0$) and its Fredholm determinant represents the Selberg zeta function. Thus, for all~$s\in\nobreak\C$ we have
\[
 Z(s) = \det(1-\TO_s)\,.
\]
\end{enumerate}
\end{thm}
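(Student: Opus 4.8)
The three parts build on one another, and I would establish them in order. For~\ref{rdi} I would construct the family explicitly. At level $n=0$ take $E_k\sceq D_k$ for $k\in I_G$: these are open, bounded, convex, pairwise disjoint, and contain $I_k$. The nesting holds because $S_w^{-1}=S_{-w}$ maps $\Ext(D_{-w})$ into $\Int(D_w)$, while $\overline{D_v}\subseteq\Ext(D_{-w})$ precisely when $w\neq -v$ (disjoint closures); hence $S_w^{-1}.\overline{E_v}\subseteq E_w$. For $n\geq 1$ and $w=(w_1,\ldots,w_n,\ell)$ I would set $E_w\sceq S_{w_1}\cdots S_{w_n}.D_\ell$. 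Using $\ell\neq w_n$ and $w_k\neq -w_{k-1}$ at each step, one checks by downward induction on $j$ that $S_{w_j}\cdots S_{w_n}.\overline{D_\ell}$ lands strictly inside $D_{-w_j}$; in particular $E_w$ is a bounded Euclidean disk (hence open and convex) with $I_w\subseteq E_w$. The nesting condition for $w=(w_1,v_1,\ldots,v_{n-1},-v_n)$ reduces, after cancelling the common prefix $S_{w_1}S_{v_1}\cdots S_{v_{n-1}}$, to the single inclusion $S_{v_n}.\overline{D_{\ell_v}}\subseteq D_{-v_n}$, valid since $\ell_v\neq v_n$. Pairwise disjointness follows by induction on $n$: distinct words with differing first letters yield sets lying in the disjoint disks $D_{-w_1}$ and $D_{-w_1'}$, and equal first letters are removed by the injective $S_{w_1}^{-1}$, reducing to shorter words.

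For~\ref{rdii} I would read off from Proposition~\ref{prop:TOcoeff} that every nonzero matrix coefficient $\TO_{s,v,w}$ is the single weighted composition operator $\tau_s(S_{-w_1})\colon f\mapsto (S_{w_1}')^s\cdot (f\circ S_{w_1})$ (and $\tau_s(S_w)$ when $n=0$). The admissibility inclusion $S_{w_1}.\overline{E_v}\subseteq E_w$ guarantees that $f\circ S_{w_1}$ is holomorphic on a neighbourhood of $\overline{E_v}$ for every $f\in H(E_w)$, hence restricts to an element of $H(E_v)$; convexity of $E_v$ makes it simply connected, so the non-vanishing factor $(S_{w_1}')^s$ admits an unambiguous holomorphic branch there. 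Each coefficient is thus a bounded map $H(E_w)\to H(E_v)$, and assembling the finite matrix yields a bounded endomorphism $\TO_s$ of $H(\mc E_n)=\bigoplus_w H(E_w)$ that restricts to the operator of~\eqref{eq:TO_basic} on $\Fct(\mc I_n;\C)$.

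For~\ref{rdiii} I would argue in two steps. For nuclearity, the inclusion $S_{w_1}.\overline{E_v}\subseteq E_w$ is compact, so each coefficient factors through a restriction between spaces of holomorphic functions on nested domains, whose singular values decay super-exponentially; such maps are nuclear of order~$0$ on the admissible spaces (cf.~\cite{Bandtlow_Jenkinson}), and a finite matrix of order-$0$ nuclear operators is again order-$0$ nuclear, in particular trace class. For the Fredholm determinant I would use $\det(1-\TO_s)=\exp\!\big(-\sum_{k\geq1}\tfrac1k\Tr\TO_s^k\big)$ and show the traces are independent of $(n,\mc E_n)$. Expanding $\Tr\TO_s^k$ as a sum over closed length-$k$ paths in the coefficient matrix, Proposition~\ref{prop:TOcoeff} labels each path by a group element $g=S_{j_1}\cdots S_{j_k}$ returning to a single component as a weighted composition operator $f\mapsto W\cdot(f\circ\phi)$, with $\phi$ a hyperbolic contraction of $E_w$ whose attracting fixed point $x_g\in I_w$ and multiplier $\phi'(x_g)=e^{-\ell}$ depend only on $g$. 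The holomorphic Lefschetz fixed-point formula then gives the contribution $W(x_g)/(1-\phi'(x_g))=e^{-s\ell}/(1-e^{-\ell})$, reproducing~\eqref{eq:trace_k}. Since for $n=0$ with $E_k=D_k$ the space $H(\mc E_0)$ is the disk algebra (resp.\ the Hilbert--Bergman space), for which $Z_X(s)=\det(1-\TO_s)$ already holds (Section~\ref{sec:def_TO}), and since a nuclear operator's Fredholm determinant is determined by its traces, the identity transfers to every $H(\mc E_n)$.

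The main obstacle is the trace computation in~\ref{rdiii}: justifying the fixed-point trace formula on the specific admissible spaces and, above all, matching the combinatorics of closed coding paths with the periodic geodesics of~$X$ -- including the primitivity and multiplicity bookkeeping -- so that the resulting sum reproduces~\eqref{eq:trace_k} exactly and independently of the refinement level~$n$. Parts~\ref{rdi} and~\ref{rdii} are, by contrast, a direct consequence of the nesting built into the definition of admissible neighbourhoods.
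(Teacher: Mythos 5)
Your proposal is correct and follows essentially the same route as the paper: part (i) is proved there with exactly the same neighborhoods (the open disks centered on $\R$ with $E_w\cap\R=I_w$, which coincide with your pushed-forward disks $S_{w_1}\cdots S_{w_n}.D_\ell$), while parts (ii) and (iii) are settled by citing \cite{GLZ} for the case $n=0$ with Hilbert--Bergman spaces and \cite{FP_szf} for general refinement levels and admissible spaces. The arguments you sketch --- boundedness and nuclearity of the nested composition-operator blocks, and the trace/fixed-point computation whose combinatorial bookkeeping you rightly flag as the main labour --- are precisely the content of those references, so nothing in your approach diverges from the paper's.
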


\begin{proof}
To establish~\ref{rdi} we let, for any $w\in\mc W_n$, the set~$E_w$ denote the open disk in~$\C$ with center in~$\R$ that satisfies $E_w\cap\R = I_w$. A straightforward calculation, taking advantage of the fact that Moebius transformations preserve disks, shows that $\{E_w\}_{w\in\mc W_n}$ is a family of admissible neighborhoods of~$\mc I_n$. Since for $n=0$, $\mc E_0$ being a family of disks and $H(\mc E_0)$ being chosen as Hilbert--Bergman spaces, the validity of~\ref{rdii} and~\ref{rdiii} is known (both statements follow directly from~\cite{GLZ}), claims~\ref{rdii} and~\ref{rdiii} for arbitrary $n\in\N$, arbitrary families~$\mc E_n$ of admissible neighborhoods of~$\mc I_n$ and arbitrary admissible function spaces~$H(\mc E_n)$ now follow either directly from~\cite{FP_szf} or with obvious adaptions. (Establishing the existence of a family of ellipses satisfying~\ref{rdi} is a non-trivial matter and will be discussed in detail in the forthcoming follow-up article.)
\end{proof}

\section{Lagrange--Chebyshev approximation}\label{sec:LC}

Let $X$ be a Schottky surface, 
\[
 \mc S = \left( q, \big(D_k\big)_{k\in I_G}, \big(S_k\big)_{k\in I_G}\right)
\]
a choice of Schottky data for~$X$, $n\in\N_0$ and $\mc E_n$ a family of admissible neighborhoods of~$\mc I_n$ such that for each $w\in\mc W_n$, the set~$E_w$ is an ellipse with foci on the boundary points of the interval~$I_w$, let $H(\mc E_n)$ be the Hilbert--Hardy space, and let~$\TO_s$ denote the transfer operator family associated to~$\mc S$, considered as an operator on~$H(\mc E_n)$. We recall that the Selberg zeta function $Z_X$ equals the Fredholm determinant of~$(\TO_s)_{s\in\C}$, thus, for all $s\in\C$ we have $Z_X(s) = \det(1-\TO_s)$.

The Lagrange--Chebyshev approach for the numerical calculation of the
Selberg zeta function consists of three basic steps:
\begin{enumerate}[label=(\roman*),font=\normalfont]
\item \label{LC-1} expand functions in $H(\mc E_n)$ in Chebyshev series, and use Gauss--Chebyshev quadrature to approximate the integrals in the coefficients,
\item \label{LC-2} describe the action of $1-\TO_s$ on the Chebyshev series, thus on the Chebyshev polynomials, and approximate these actions in the sense of Lagrange, resulting in an approximation by a finite-dimensional matrix, 
\item \label{LC-3} approximate $Z_X(s)$ by the determinant of this matrix.
\end{enumerate}
In steps~\ref{LC-1} and~\ref{LC-2} the ideas of Lagrange and Chebyshev
come into play, as we use a Chebyshev expansion to construct Lagrange
interpolating polynomials. Steps~\ref{LC-2} and~\ref{LC-3} are
reminiscent of Nystr\"om's discretization scheme for Fredholm integral
equations~\cite{Nystroem30}, which leads to a highly efficient
numerical method for the evaluation of Fredholm determinants of
integral operators~\cite{Bornemann10}.

\subsection{Interpolating polynomials}

A function~$f$ defined real-analytically in the neighborhood of the real interval~$[-1,1]$ can
be expanded in terms of Chebyshev polynomials of the first kind,
\begin{equation}
  f(x) = \mu_0 + 2 \sum_{k=1}^{\infty} \mu_k T_k(x)\,, \qquad (x\in [-1,1])
\end{equation}
with polynomials
\begin{equation}
  T_k(x) \coloneqq \cos(k \arccos(x))
\end{equation}
and expansion coefficients
\begin{equation}
  \mu_k = \int_{-1}^{1} \frac{f(x) T_k(x)}{\pi \sqrt{1-x^2}}\, dx\,.
\end{equation}
These integrals can be approximated using Gauss--Chebyshev
quadrature,
\begin{equation}
  \int_{-1}^{1} \frac{g(x)}{\pi \sqrt{1-x^2}}\, dx \approx \sum_{j=1}^{N} w_j\ g(x_j)\,,
\end{equation}
with points and weights
\begin{equation}
  x_j = \cos\left(\frac{2j-1}{2N}\pi\right),\qquad w_j = \frac{1}{N}\,,\qquad j=1,\ldots,N\,.
\end{equation}
For the expansion coefficients this leads to
\begin{equation}\label{muapprox}
  \mu_k \approx \sum_{j=1}^{N} w_j\ T_k(x_j)\ f(x_j) \,.
\end{equation}
Hence, we can define a Chebyshev kernel $K_M(x, y)$ and approximate
the function $f$ by its values at the quadrature points $x_j$,
\begin{equation}\label{fxapprox}
  \begin{aligned}
    K_M(x, y) & = \frac{1}{M} \left[T_0(x) T_0(y) + 2 \sum_{k=1}^{M-1} T_k(x) T_k(y)\right]\,,\\
    f(x) & \approx \frac{M}{N} \sum_{j=1}^{N} K_M(x, x_j)\ f(x_j)\,.
  \end{aligned}
\end{equation}

Which order $M$ should be used for the kernel $K_M$? The approximation in~\eqref{fxapprox}
is a polynomial of degree $M-1$, which is uniquely defined specifying $N=M$ data points.
In addition, we notice that inserting the definitions of $T_k(x)$, $x_j$ and $w_j$ into~\eqref{muapprox} leads to
\begin{equation}
  \mu_k   = \frac{1}{N} \sum_{j=1}^{N} \cos\left(\frac{\pi}{N} k (j-\tfrac{1}{2})\right) f(x_j)
  \quad\text{with}\quad
  k=0,\dots,N-1\,,
\end{equation}
i.e., a discrete cosine transform of type II~\cite{DCT-1974,DCT-Review}.
Its inverse is the discrete cosine transform of type III, which coincides with the
Chebyshev expansion~\eqref{fxapprox} truncated at order $M=N$ and evaluated at the
points $x=x_j$,
\begin{equation}
  f(x_j) = \mu_0 + 2 \sum_{k=1}^{N-1} \mu_k \cos\left(\frac{\pi}{N} k (j-\tfrac{1}{2})\right)
  \quad\text{with}\quad
  j=1,\dots,N\,.
\end{equation}
Truncation at order $M=N$ is therefore a natural and consistent
choice, which leads to
\begin{equation}
  K_N(x_i, x_j) = \delta_{ij}\,,
\end{equation}
and \eqref{fxapprox} then represents a Lagrange interpolation
polynomial which is exact at the quadrature points $x_j$.
The approximation error in~\eqref{fxapprox} with $M=N$ equals
\begin{equation*}
  \frac{f^{(N)}(\xi_x)}{N!}\Phi_N(x)
\end{equation*}
with $\Phi_N(x) = \prod_{j=1}^{N}(x-x_j)$ and appropriate $\xi_x\in I$.
Compared to other choices of the collocation points, the interpolation
with Chebyshev quadrature points $x_j$ has the advantage to minimize the maximum
magnitude of $\Phi_n(x)$. See, e.g.,~\cite{DahlquistBjoerk,Rivlin74}.

For real-analytic functions $f$ defined in the neighborhood of a generic interval $[a,b]$ we
can rescale linearly,
\begin{equation}
  [a,b]\ni y = c + r x\quad\text{with}\quad x\in[-1,1],\ c = \frac{a+b}{2},\ r=\frac{b-a}{2}\,.
\end{equation}
This leads to the approximations
\begin{equation}
  \begin{aligned}
    f(c + r x) & \approx \sum_{j=1}^{N} K_N(x,x_j)\ f(c + r x_j)
    \qquad\text{or}\\
    f(y)& \approx \sum_{j=1}^{N} K_N[(y-c)/r,(y_j-c)/r]\ f(y_j)\,.
  \end{aligned}
\end{equation}

\subsection{Discretization of the transfer operator}

Each matrix coefficient~$\TO_{s,v,w}$, $v,w\in\mc W_n$, of the transfer operator $\TO_s$ is either the zero operator or an operator~$\tau_s(S_k)$, $k\in I_G$, applied to functions~$f_w$ on a neighborhood (here, an ellipse) of the interval~$I_w$ resulting in functions~$f_v$ on a neighborhood of the interval~$I_v$. See Proposition~\ref{prop:TOcoeff}. Each such matrix coefficient of $\TO_s$ can be discretized
using~\eqref{fxapprox}, i.e., approximated by a finite $N\ttimes N$-matrix. Since all collocation points are contained in the intervals, we may and shall restrict the further discussion to (real-analytic) functions on these intervals, ignoring their extension to complex neighborhoods.

Let $I_w, I_v \in \mc I_n$ be intervals and let $S_k$, $k \in I_G$, be one
of the generators.  Denote the centers and radii of the intervals by
$c_w,c_v$ and $r_w,r_v$, respectively.  The $N$-point approximation
$f_w^{(N)}$ of a function $f_w\in C^\omega(I_w;\C)$ is then given by
\begin{equation}
  \begin{aligned}
    f_w^{(N)}(y) & = \sum_{j=1}^{N} K_N[(y-c_w)/r_w,(y_j-c_w)/r_w]\ f_w(y_j)\,,\quad\text{with}\\
    y_j & = c_w + r_w x_j\,,\quad x_j =
    \cos\left(\tfrac{2j-1}{2N}\pi\right)\,,\quad j=1,\ldots,N\,.
  \end{aligned}
\end{equation}
Applying the matrix coefficient $\TO_{s,v,w}$ to $f_w^{(N)}$ we obtain either zero
or
\begin{equation}
  \begin{aligned}
    \TO_{s,v,w} f_w^{(N)}(y) & = \tau_s(S_k) f_w^{(N)}(y)
    = \big(S_{-k}'(y)\big)^s f_w^{(N)}(S_{-k}.y)\\
    & = \big(S_{-k}'(y)\big)^s \sum_{j=1}^{N}
    K_N[(S_{-k}.y-c_w)/r_w,(y_j-c_w)/r_w]\ f_w(y_j)\,,
  \end{aligned}
\end{equation}
which is a function~$f_v$ on $I_v$ that we can evaluate at the quadrature
points
\begin{equation}
  y_i = c_v + r_v x_i\quad\text{with}\quad
  x_i = \cos\left(\tfrac{2i-1}{2N}\pi\right)\,,\quad i=1,\ldots,N\,.
\end{equation}
The order-$N$ discretization~$\TO_{s,v,w}^{(N)}$ of~$\TO_{s,v,w}$ is the linear operator which maps
function values at quadrature points in $I_w$,
$f_w(c_w + r_w x_j\in I_w)$, to function values at quadrature points in
$I_v$, $f_v(c_v + r_v x_i\in I_v)$. Its $N\ttimes N$-matrix elements are
given by
\begin{equation}\label{def:discrTO}
  \Big(\TO_{s,v,w}^{(N)}\Big)_{i,j}
  = \begin{cases}
    \big(S_{-k}'(c_v + r_v x_i)\big)^s K_N\!\left[\dfrac{S_{-k}.(c_v + r_v x_i) - c_w}{r_w}, x_j\right] & \text{if }S_{-k}.I_v \subseteq I_w\\[2mm]
    0 & \text{otherwise.}
  \end{cases}
\end{equation}
To get an order-$N$ discretization~$\TO_s^{(N)}$ of the full transfer
operator we replace each matrix coefficient~$\TO_{s,v,w}$ by its discretization~$\TO_{s,v,w}^{(N)}$. This results in a big square matrix constructed from many $N\ttimes N$-blocks, most of which are zero. The
full dimension of the vector space the discretization~$\TO_s^{(N)}$ is acting on is~$2q(2q-1)^n N$.

\subsection{Numerical evaluation of matrix determinants}

The transfer operator consists of an $s$-dependent part and a static
part, which only depends on the given intervals and group
generators. When $\TO_s^{(N)}$ is evaluated numerically for different
values of~$s$, only the first part needs to be changed, the second can
be re-used.

To calculate the numerical value of the Selberg zeta function~$Z_X$ at~$s$, thus $Z_X(s) = \det(1 - \TO_s)$, we
construct the matrix $1 - \TO_s^{(N)}$ and calculate its
determinant with some standard numerical library routine. Such routines are
usually based on $LU$-factorization of the matrix, and available code
is highly optimized and fast. For dense matrices (low domain
refinement) we can use \texttt{zgetrf} from LAPACK, for sparse
matrices (high domain refinement) SuiteSparse offers the
\texttt{umfpack} family of functions. Higher level languages such as
Python, Sage or Julia offer simple wrapper functions for matrix
determinants (and for $\log\det[.]$), which select and call one of the
above optimized libraries.

\subsection{Finding zeros of $Z_X$}

\begin{figure}
  \centerline{
    \includegraphics[width=0.49\linewidth]{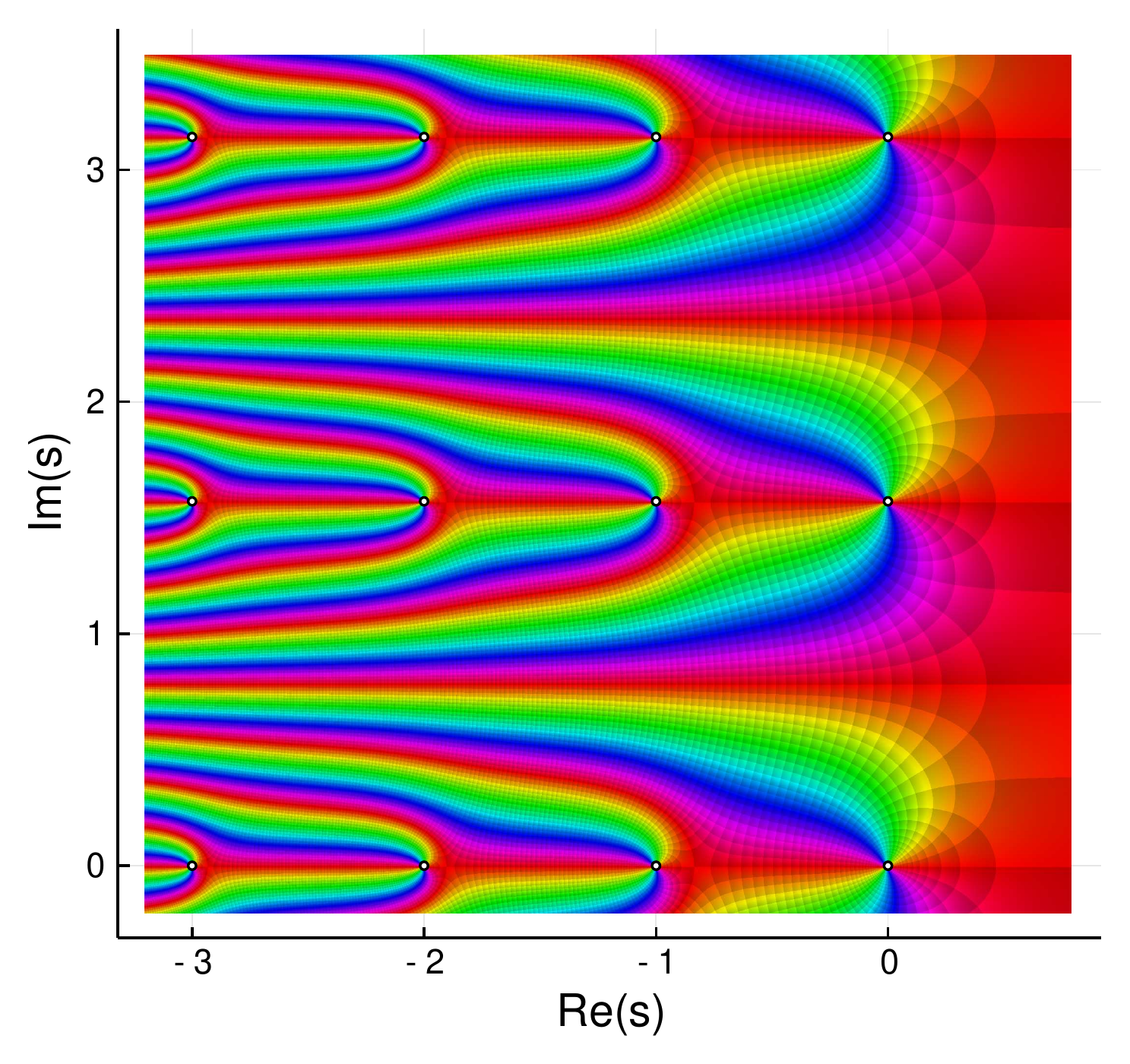}
    \includegraphics[width=0.49\linewidth]{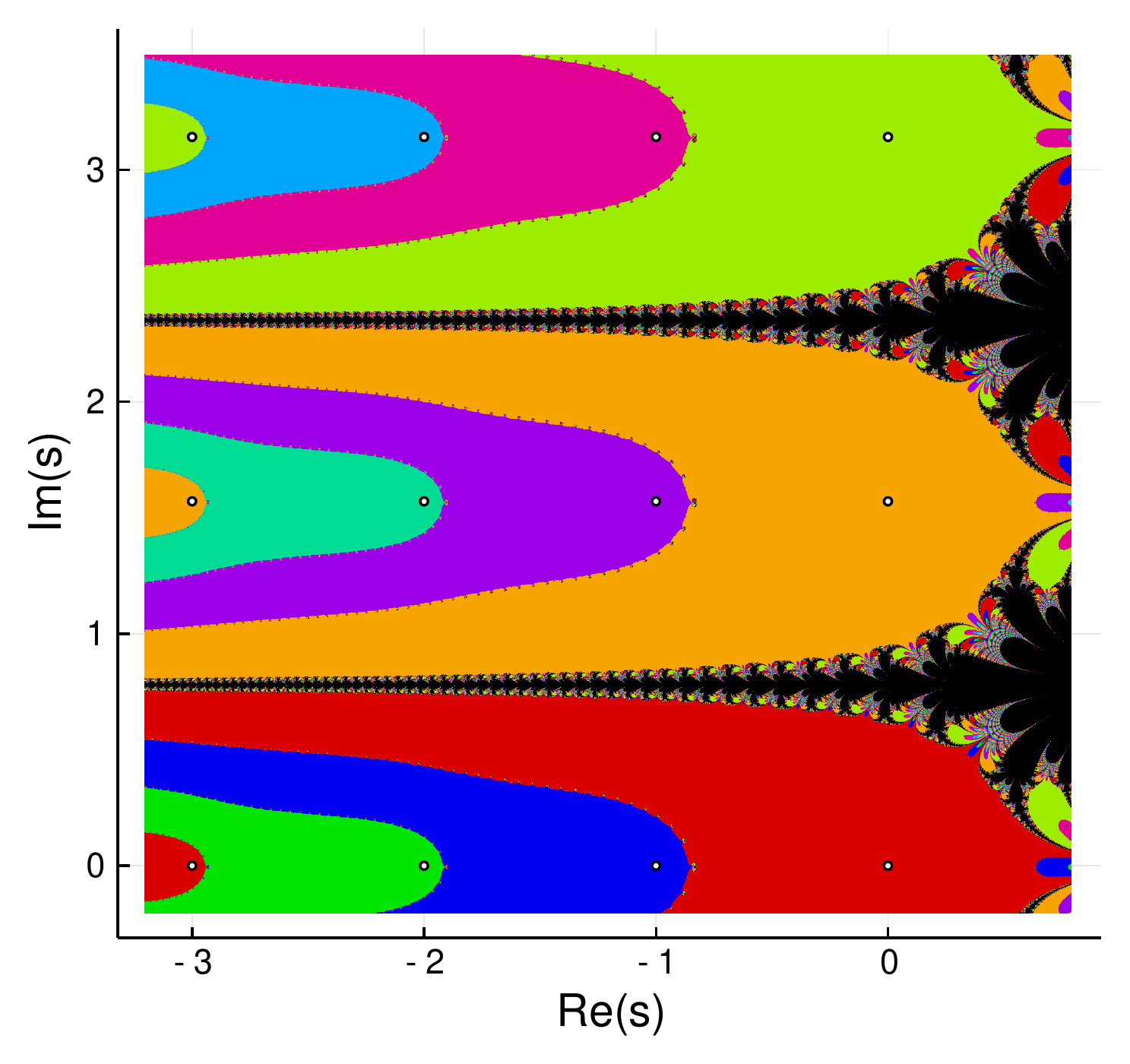}
  }
  \caption{Phase plot of $Z_X(s)$ (left) and Newton fractal (right) for
  the hyperbolic cylinder with funnel width $4$.}\label{fig:newton4cylinder}
\end{figure}

With a stable algorithm for $Z_X(s)$ at hand we are now in the position
to calculate resonances, which are zeros of $Z_X$. A simple and
efficient algorithm for this task is Newton's method, which is based
on the iteration
\begin{equation}
  s\to s - \frac{Z_X(s)}{Z'_X(s)}\,.
\end{equation}
The absolute value of $Z_X(s)$ grows exponentially with $-\Rea s$, i.e.,
there is always strong gradient from negative $\Rea s$ to positive. We
observe that for every zero~$s_0$ with $\Rea s_0 > c$ there is a curve of
constant phase, $\Ima\log(Z_X(s)) = \phi$, which starts at $s=s_0$ and
crosses the line $s=c + i\R$ eventually. This behavior is reflected
in the Newton fractal of $Z_X(s)$: For every root~$s_0$ with $\Rea s_0 > c$
the basin of attraction of the Newton map intersects the line
$c + i\R$ at least once. In Figure~\ref{fig:newton4cylinder} we
illustrate this behavior for the simplest Schottky surface, i.e., for the
hyperbolic cylinder.

To locate all zeros~$s_0$ with $\Rea s_0 > c$ we therefore choose a
sufficiently dense set of starting values along the line $c + i\R$ and
iterate Newton's method until it converges or fails. The set of
zeros we find can contain duplicates, which we need to filter out, and
we may miss zeros if the set of starting values is not dense enough.

Unless the convergence rate is analyzed in detail, Newton's method cannot
determine the order of a zero. We could use the argument principle in
a small neighborhood of a zero to find the order. However, except for
highly symmetric surfaces most of the resonances appear to be simple
zeros. For topological zeros on the real line the order is known.

\section{Examples}

We now illustrate the performance of the domain-refined Lagrange--Chebyshev
approximation with plots of the resonance spectra of various Schottky surfaces.

\subsection{Three-funnel surfaces}

\begin{figure}
  \begin{tabular}{ccc}
    \includegraphics[width=0.3\linewidth]{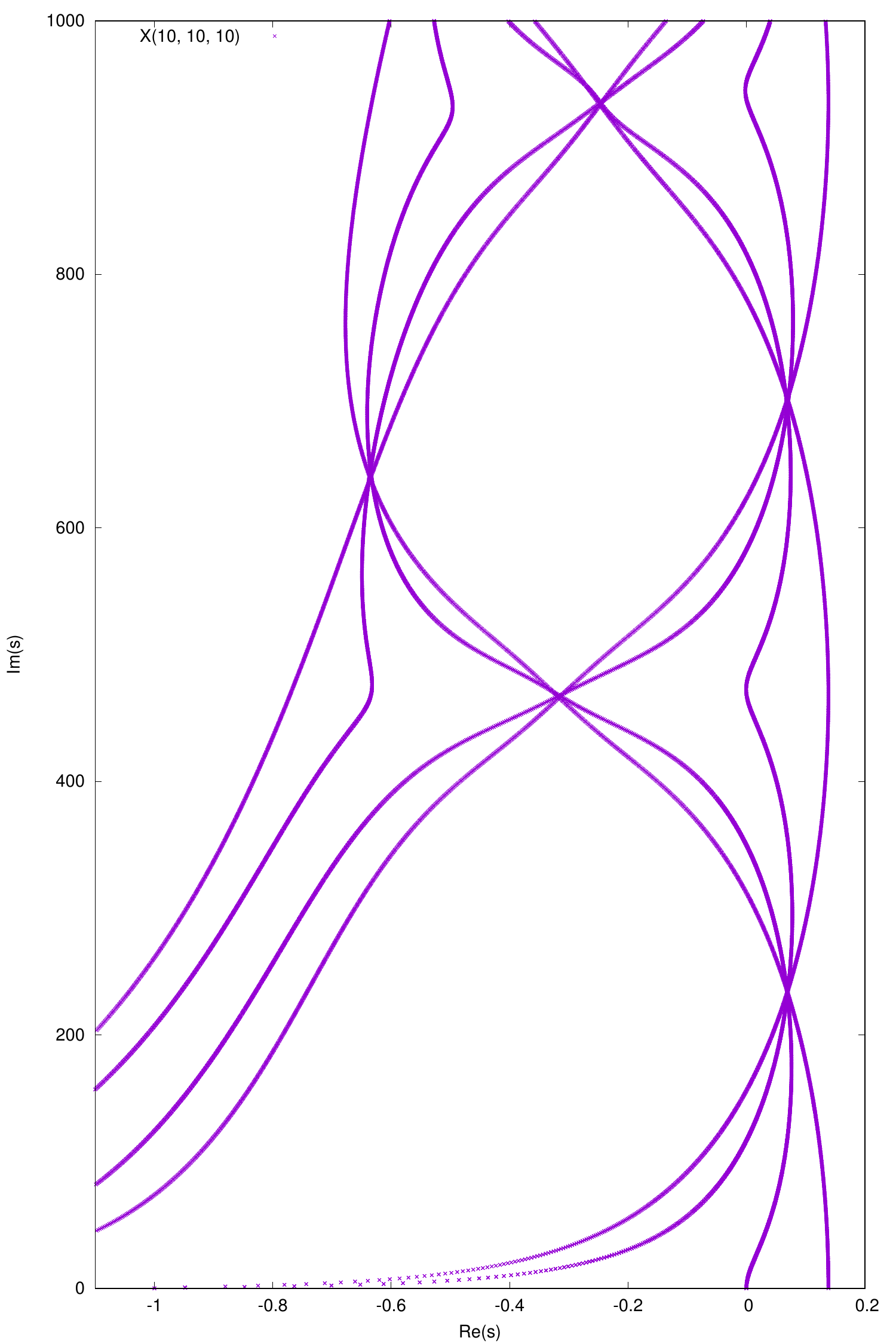} &
    \includegraphics[width=0.3\linewidth]{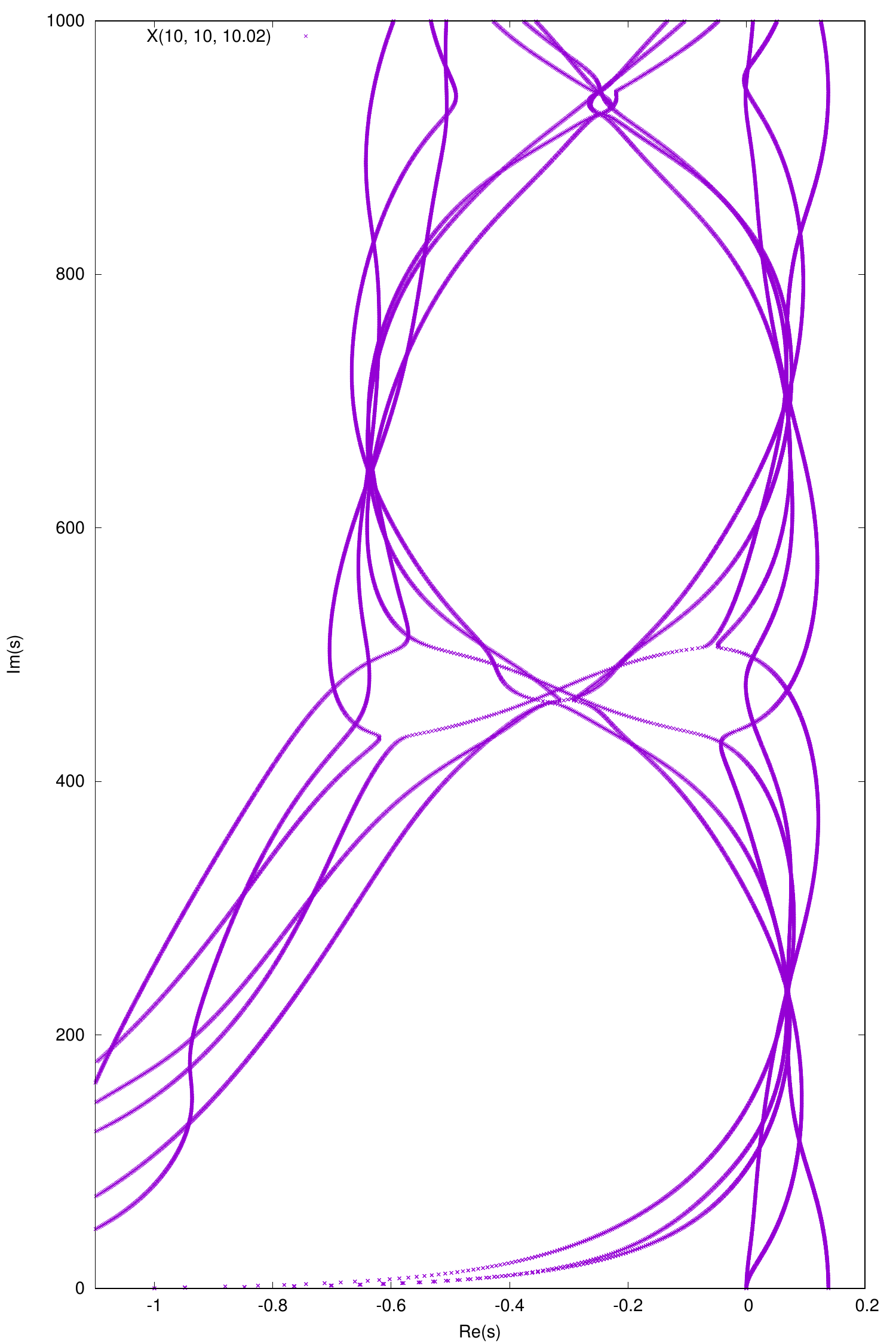} &
    \includegraphics[width=0.3\linewidth]{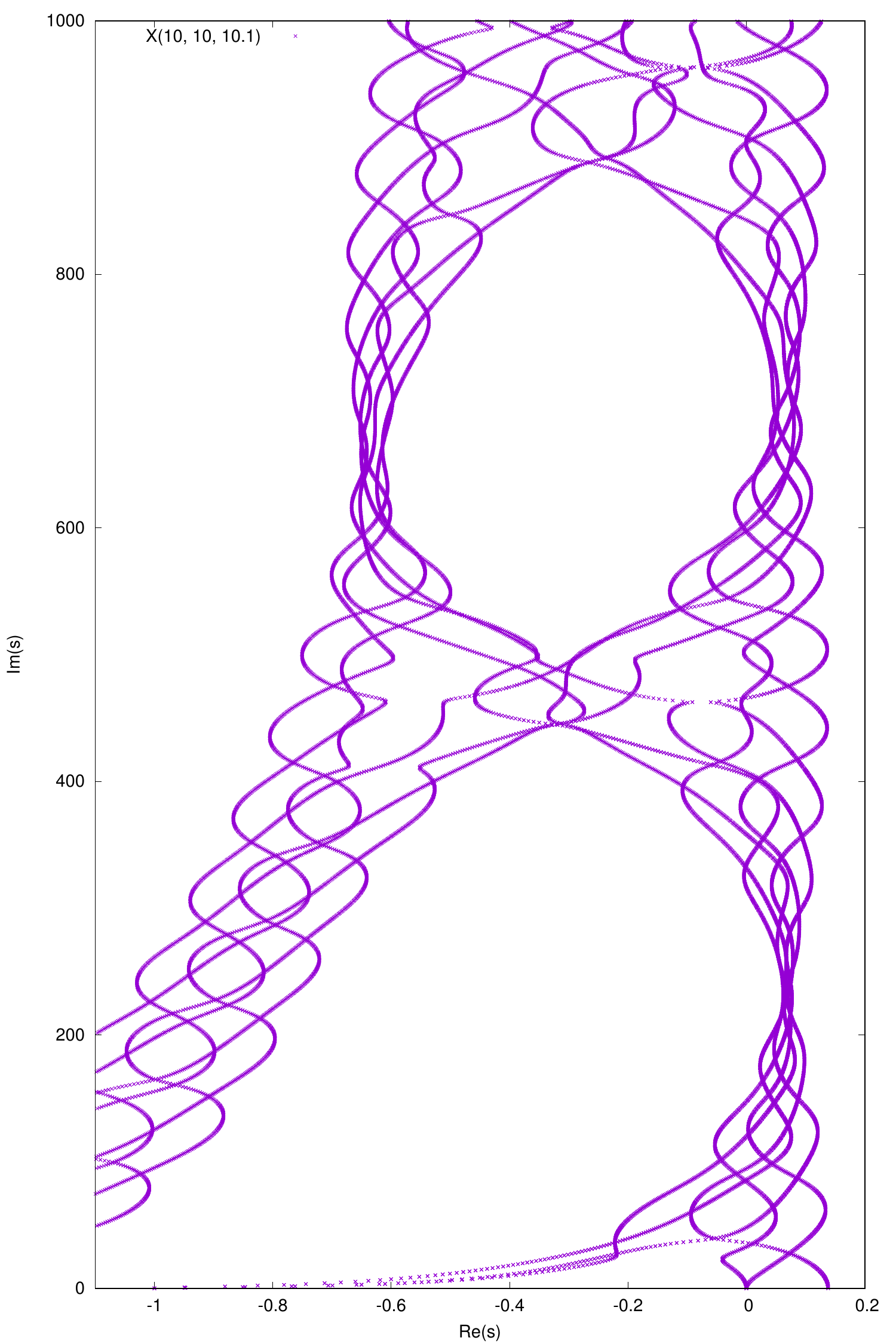}\\
    \includegraphics[width=0.3\linewidth]{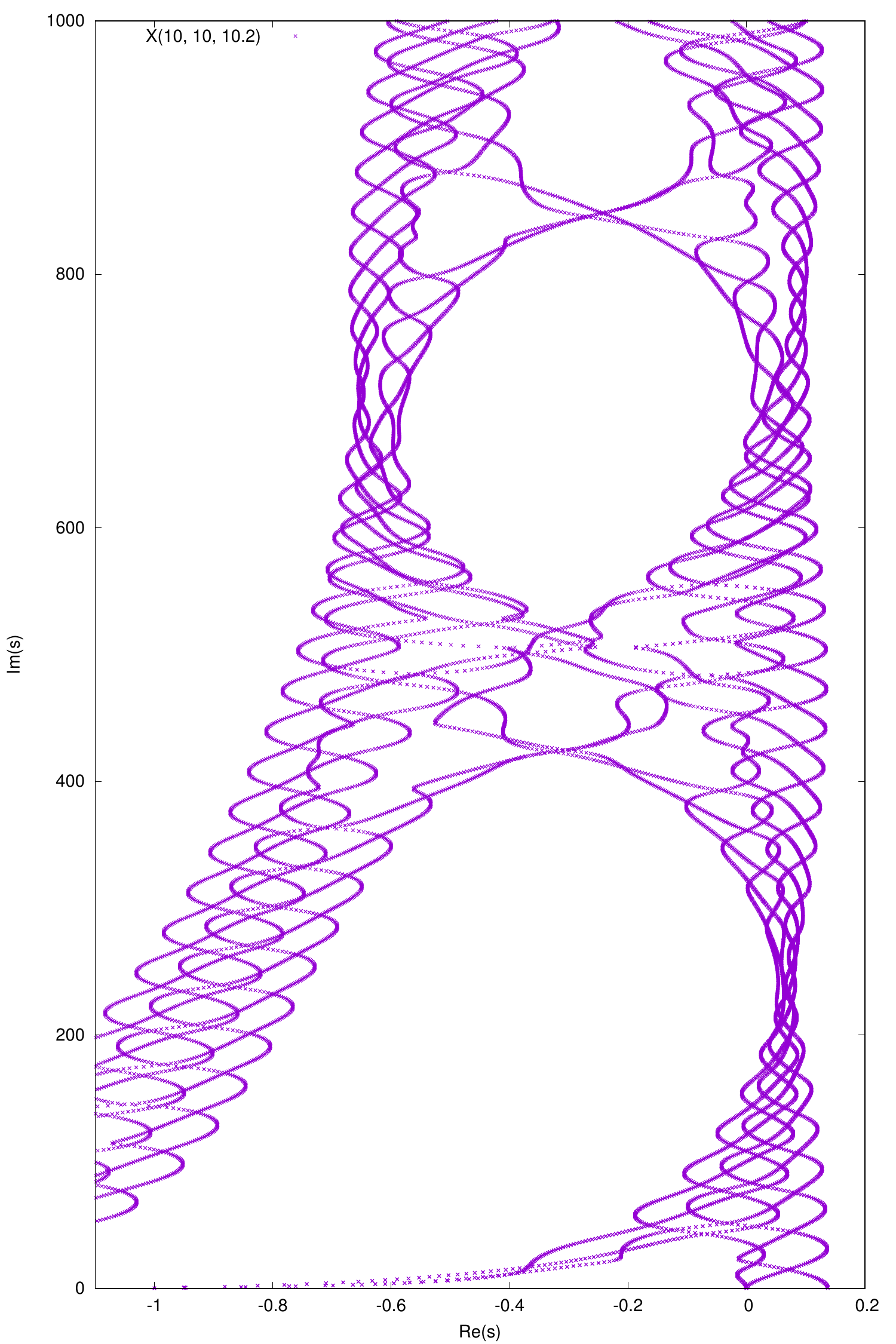} &
    \includegraphics[width=0.3\linewidth]{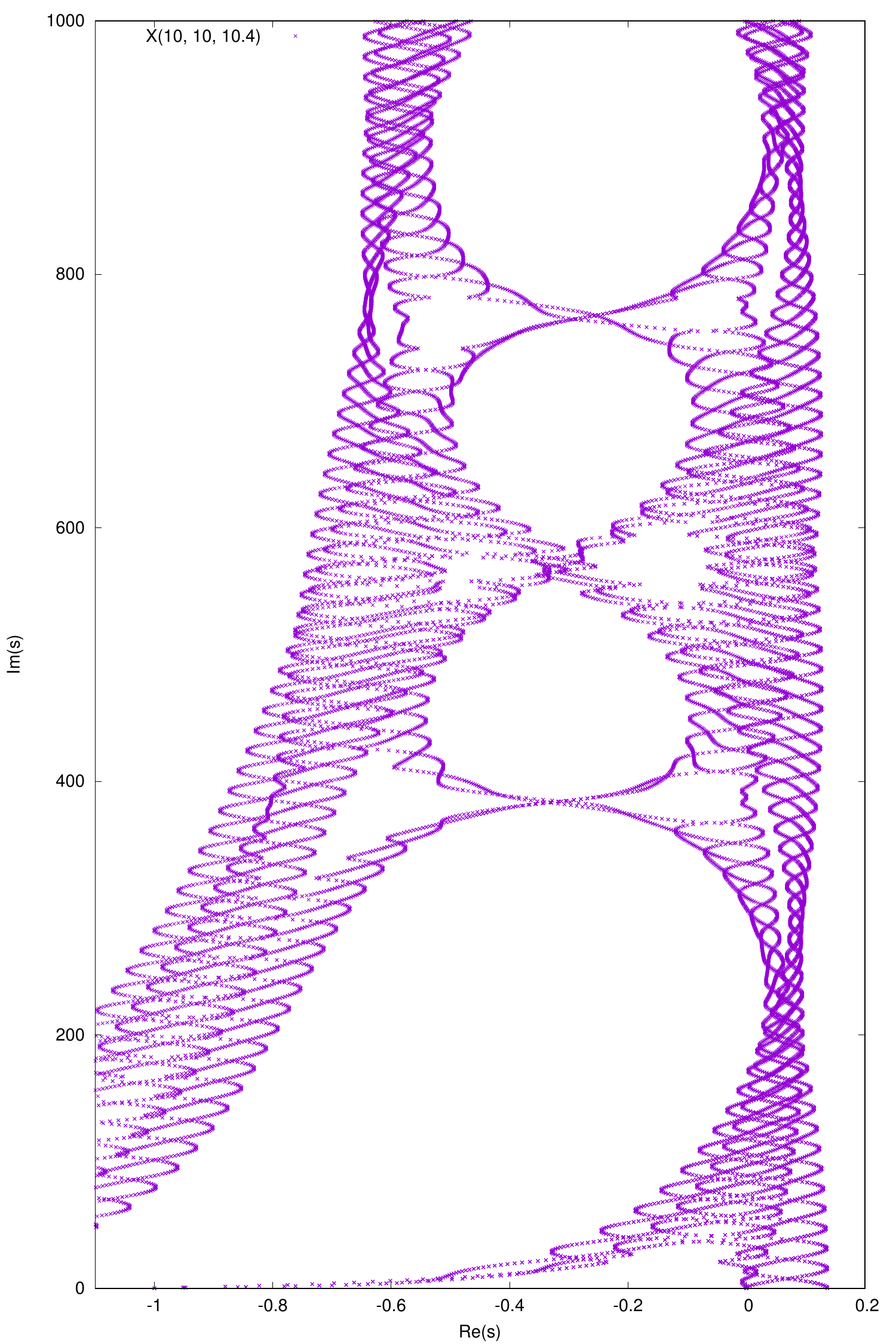} &
    \includegraphics[width=0.3\linewidth]{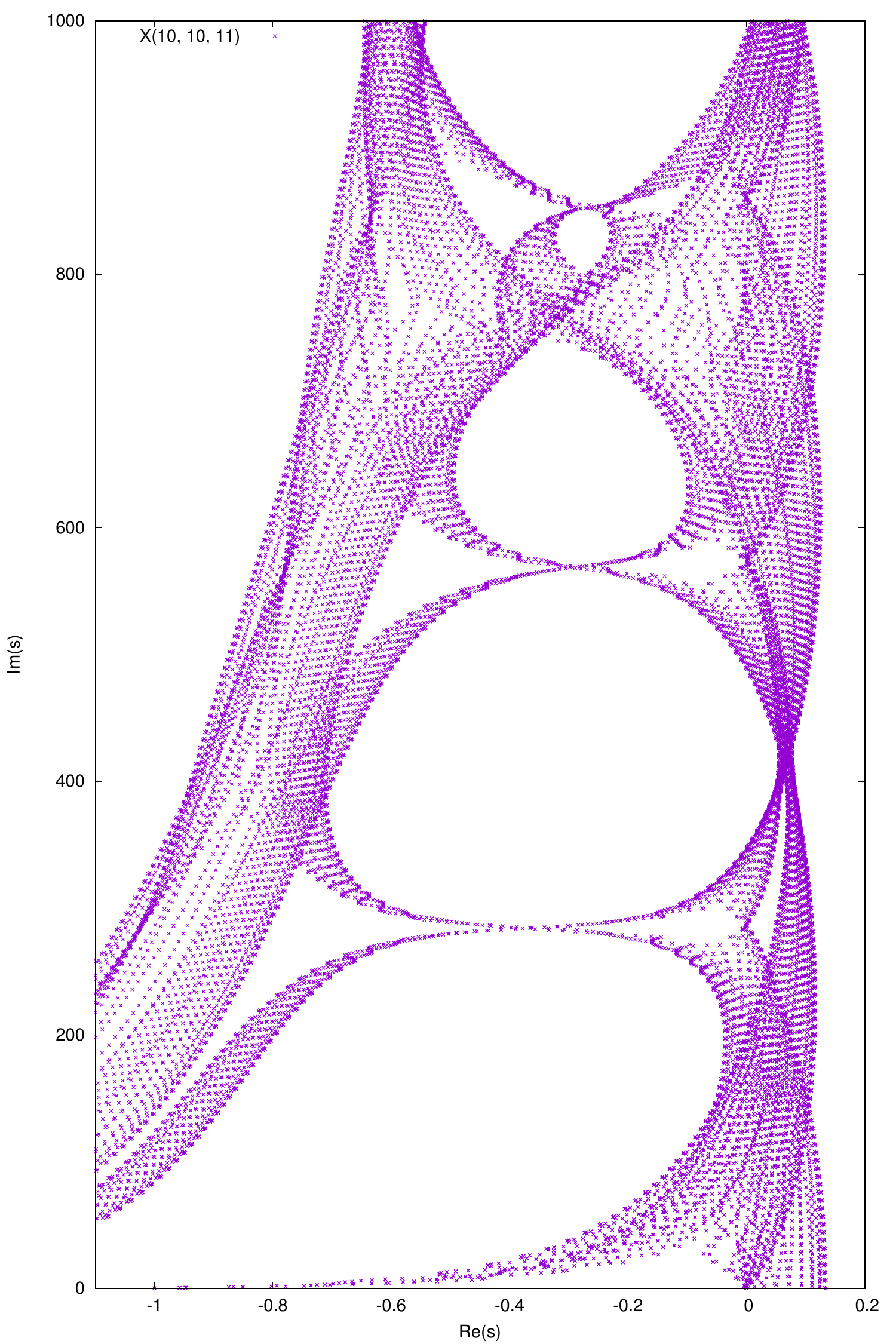}
  \end{tabular}
  \caption{Evolution of the resonance spectrum for the three-funnel
    surface $X(10,10,\ell)$ for $\ell=10, 10.02, 10.1, 10.2, 10.4$ and
    $11$.  }\label{fig:X1010ell}
\end{figure}

We start with three-funnel surfaces which were studied already in
Borthwick's seminal paper~\cite{Borthwick_experimental} and later
discussed in his book~\cite{BorthwickBook2nd}. These can be generated
using matrices of the form
\begin{equation}\label{def:S_of_ella}
  S(l,a) = \begin{pmatrix}
    \cosh(l/2) & a \sinh(l/2)\\
    a^{-1} \sinh(l/2) & \cosh(l/2)
  \end{pmatrix}
  \quad\text{with}\quad
  S(l,a)^{-1} = S(l,-a)\,.
\end{equation}
If we define the boundary of a disk $D$ by the condition
$S(l,a)'(z) = 1$, its center is at $-a\coth(l/2)$ and the radius
equals $|a/\sinh(l/2)|$. The transformation $z\to S(l,a).z$ maps
the exterior of $D$ to the inside of a disk $\tilde D$ of the same
radius but centered at $+a\coth(l/2)$. The geodesic between the
boundaries of the two disks has length $l$.

Using two generators of this type,
\begin{equation}
  S_1 = S(\ell_1,1)
  \quad\text{and}\quad
  S_2 = S(\ell_2,a_{2})\,,
\end{equation}
we glue the corresponding pairs of disks, $D_1,D_{-1}$ and $D_2,D_{-2}$,
to obtain a surface with three funnels. See Figure~\ref{fig:c3fun}. The first two funnels have
widths $\ell_1$ and $\ell_2$, the widths of the third, $\ell_3$, depends on the
parameter $a_{2}$ through the condition
\begin{equation}\label{eq:cond_on_a}
  \tr(S_1 S_{-2}) = -2 \cosh(\ell_3/2)\,,
\end{equation}
which ensures that the geodesics between $D_1$ and $D_2$ and between
$D_{-1}$ and $D_{-2}$ each have length $\ell_3/2$. Solving~\eqref{eq:cond_on_a} for
$a_{2}$ we define a function $A(\ell_1,\ell_2,\ell_3)$ for later use and obtain
\begin{equation}
  \begin{aligned}\label{def:a_of_ells}
    a_{2} & = A(\ell_1,\ell_2,\ell_3)  := d - \sqrt{d^2 - 1}\quad\text{with}\\
    d & = \frac{\cosh(\ell_1/2)\cosh(\ell_2/2) + \cosh(\ell_3/2)}{\sinh(\ell_1/2)\sinh(\ell_2/2)}\,.
  \end{aligned}
\end{equation}
Following Borthwick, we denote such a surface by $X(\ell_1,\ell_2,\ell_3)$.
The disks $D_{\pm 1}$ and $D_{\pm 2}$ form the starting point for domain-refinement, as illustrated
in Figure~\ref{fig:refinement}.

\begin{figure}
  {\centering\includegraphics[width=\linewidth]{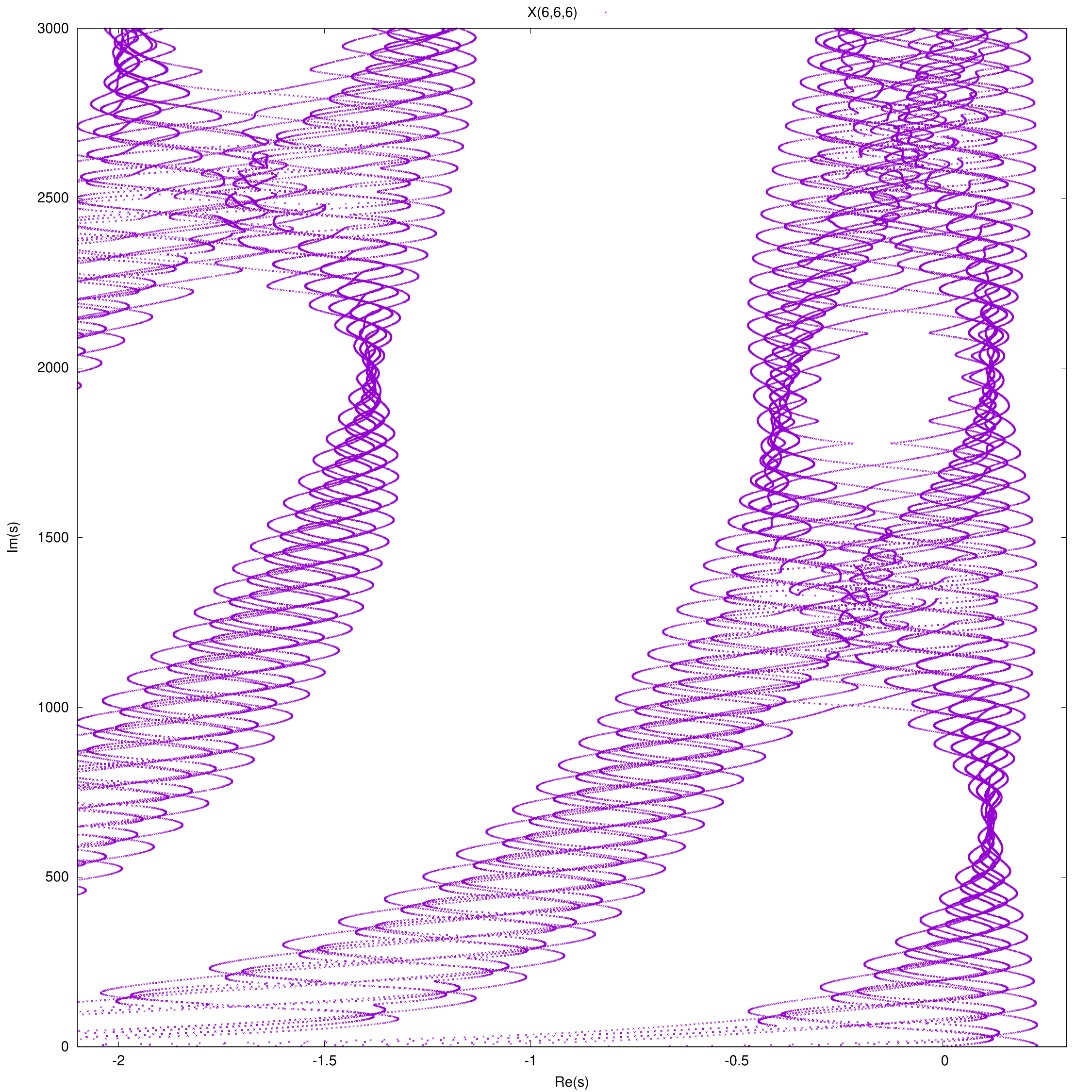}}
  \caption{Resonance spectrum for the three-funnel
    surface $X(6,6,6)$.}\label{fig:X666}
\end{figure}

As a first test, in Figure~\ref{fig:X1010ell} we reconsider the series
of resonance spectra for $X(10,10,\ell)$ first studied by
Borthwick~\cite{Borthwick_experimental}.  However, we extend the range
of the spectral parameter from $\Rea s \ge 0$ to $\Rea s>-1.1$. This
reveals many new and fascinating structures. For instance, the chains
of resonances seem to oscillate between strands formed by multiple
chains. Also, it looks as if the chains start at the topological zeros
$s = -\mathbb{N}$ and the Hausdorff dimension~$\delta$ of the limit set~$\Lambda(X)$ of~$X$. These features become even more apparent in
Figure~\ref{fig:X666} where we study the spectrum of $X(6,6,6)$ and
extend the range of the spectral parameter even further to
$\Rea s>-2.1$.

An aspect which did not receive much attention in previous studies is
the multiplicity of the resonances. We observe that for symmetric
three-funnel surfaces some of the zeros of $Z_X(s)$ have multiplicity
2. In Figure~\ref{fig:X777phase} we zoom in on the spectrum of the
surface $X(7,7,7)$ and show a phase plot of the function $Z_X(s)$ with
its argument coded in color. In each group of four zeros there are two
where the color wheel is circled twice. This corresponds to
multiplicity two. Interestingly, the Venkov--Zograf factorization used
by Borthwick--Weich does not split up these two-fold zeros,
compare~\cite[Figure~9]{Borthwick_Weich}. If the symmetry if broken, the
two-fold zeros split up and all resonances have multiplicity one, see
Figure~\ref{fig:X1010ell}.

\begin{figure}
  {\centering\includegraphics[width=\linewidth]{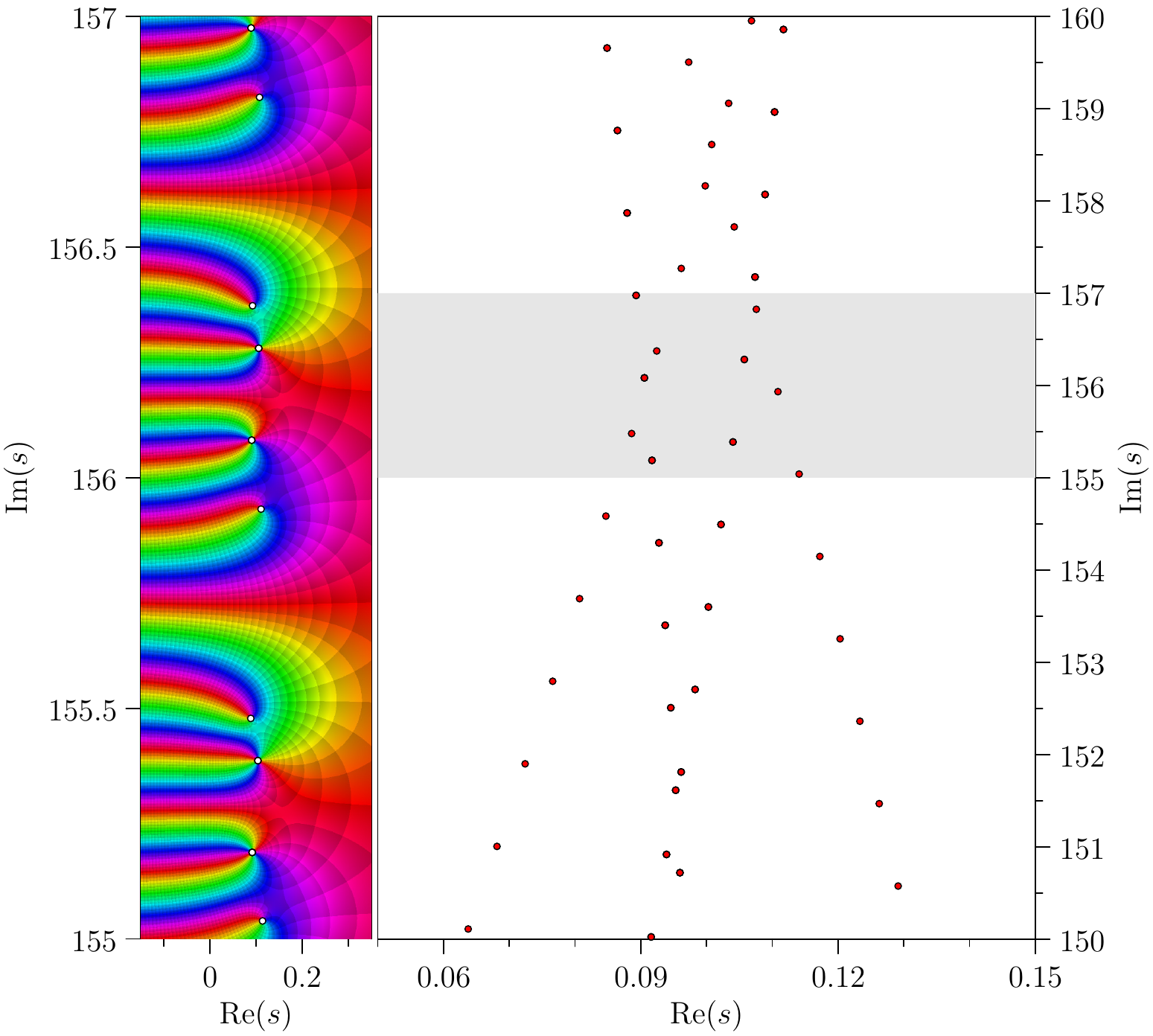}}
  \caption{Right: Zoom in on the resonance spectrum of the three-funnel surface
    $X(7,7,7)$. Left: Phase plot of $Z_X(s)$ with all zeros from the
    shaded area, indicating that some have multiplicity two. }\label{fig:X777phase}
\end{figure}

\subsection{$n$-funnel surfaces}

For surfaces with more than three funnels of widths
$\ell_1,\dots,\ell_n$, we extend the above construction by adding
further generators $S(l_k,a_k)$ of the type~\eqref{def:S_of_ella} and
corresponding pairs of disks~$D_{\pm k}$. In total we need $n-1$
generators (and their inverses) for $n$ funnels. The $l$-parameters of
the first and last generator determine the funnel-width of the first
and last funnel, $l_1=\ell_1$ and $l_{n-1}=\ell_n$, the $l$'s of the
other generators are free parameters describing a waist
circumference. The widths of the intermediate funnels can be fixed
using the function $A$ defined in~\eqref{def:a_of_ells} above. For
given $l_k$ we build up the parameters $a_k$ multiplicatively,
\begin{equation}
  a_1 = 1\,,\quad a_{k+1} = A(l_k,l_{k+1},\ell_{k+1})\ a_k\,.
\end{equation}
To make the surface symmetric, we need to tune the ``inner''
$l$-parameters such that equivalent waist circumferences are equal.
This can be done numerically.

Figure~\ref{fig:constr4fun} shows this construction for a symmetric
four-funnel surface. Here the width of all four funnels is 3 and symmetry
requires $l_2=4.853373$ for the inner generator $S_2$. Only then the
lengths of the geodesics $D_1\leftrightarrow D_3$ and
$D_{-1}\leftrightarrow D_{-3}$ add up to $l_2$ and the two waists have
equal circumference (red dashed lines). For surfaces with more funnels
the construction is similar, but there are more waistlines to match.

\begin{figure}
  {\centering \includegraphics[height=3.5cm]{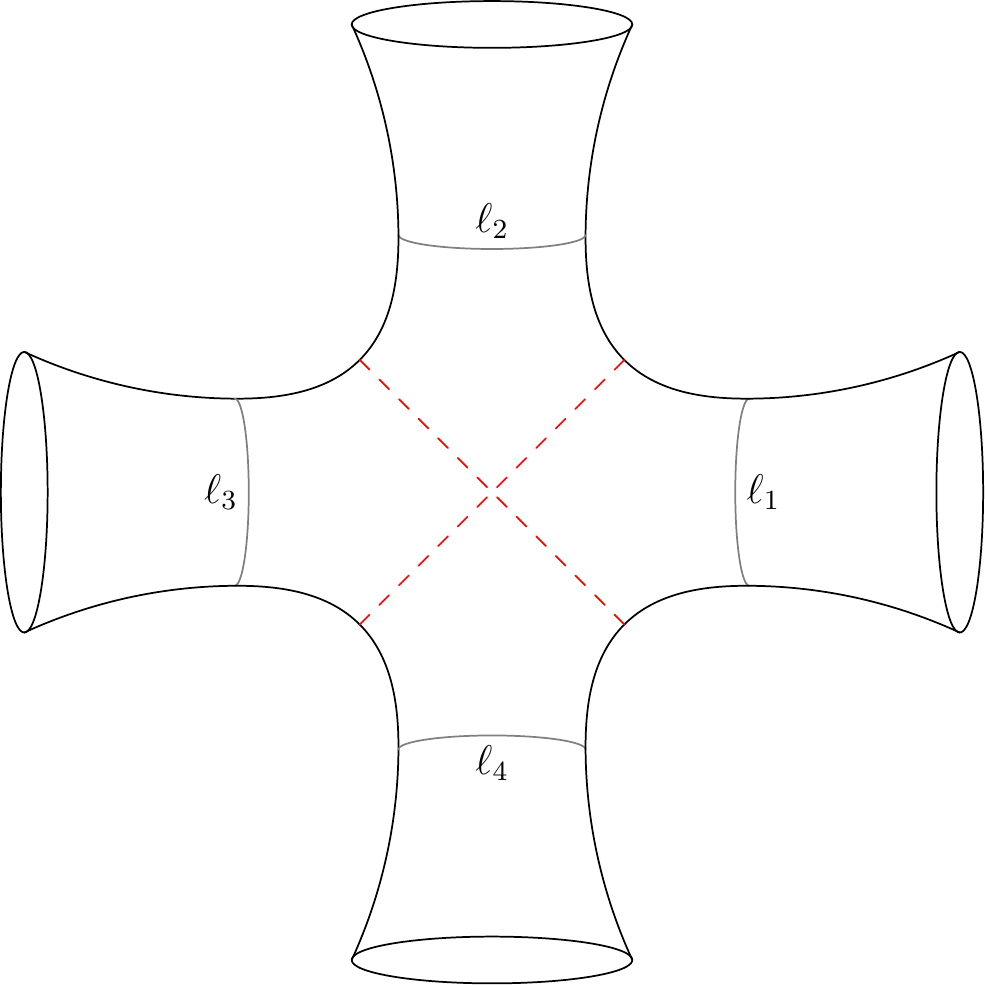}\hfill
    \includegraphics[height=3.5cm]{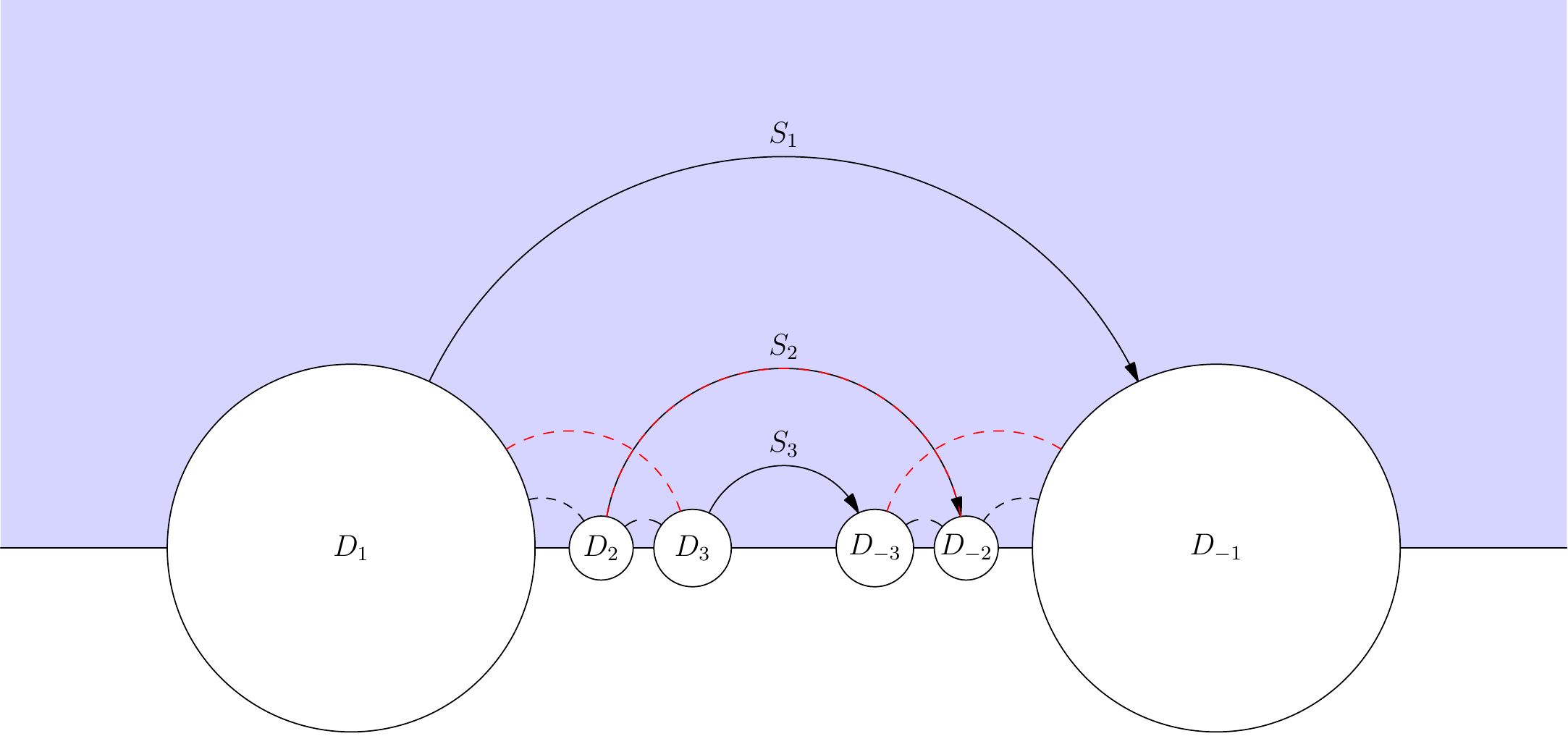} }
  \caption{Construction of a symmetric four-funnel
    surface.}\label{fig:constr4fun}
\end{figure}

\begin{figure}
  {\centering\includegraphics[width=0.48\linewidth]{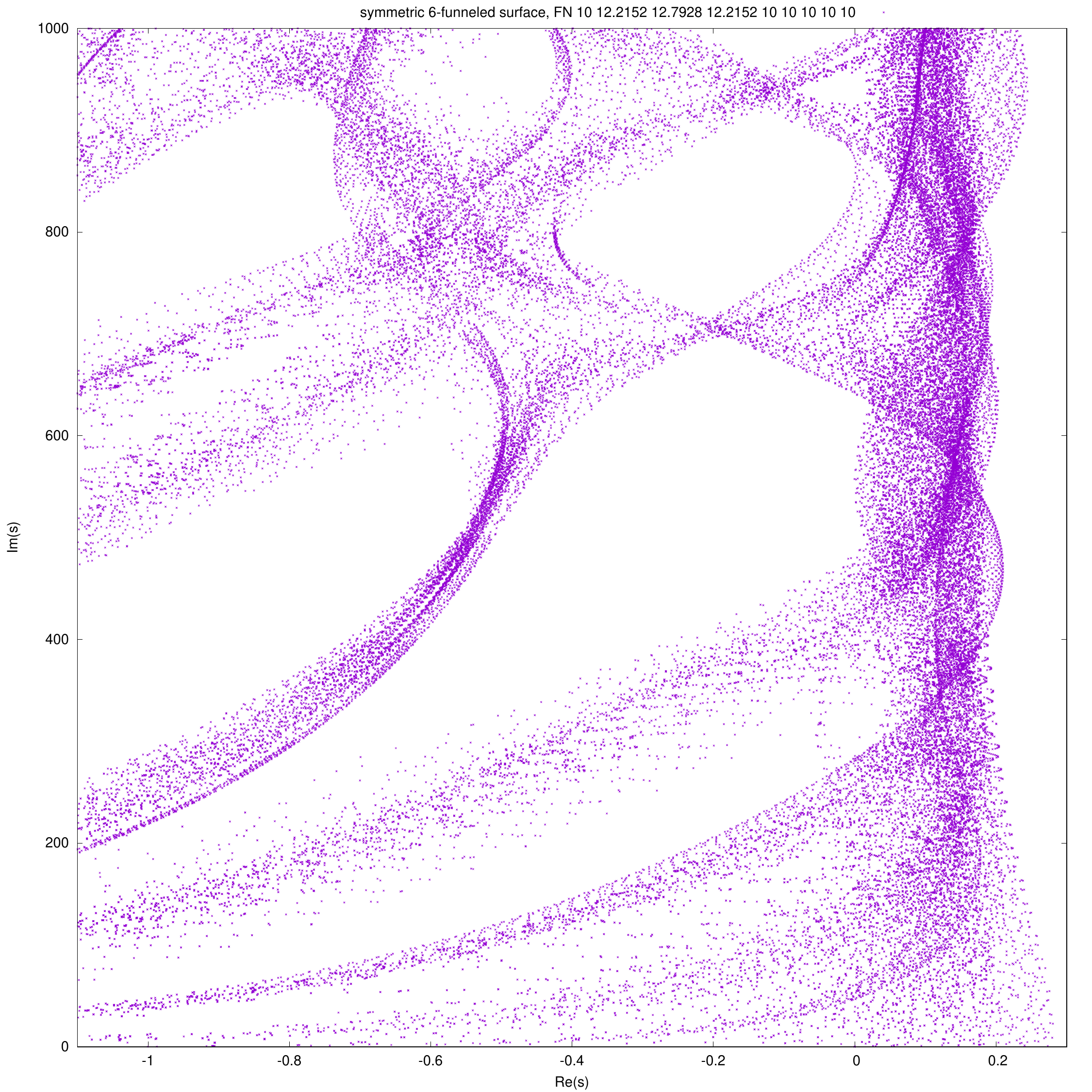}\hfill
    \includegraphics[width=0.48\linewidth]{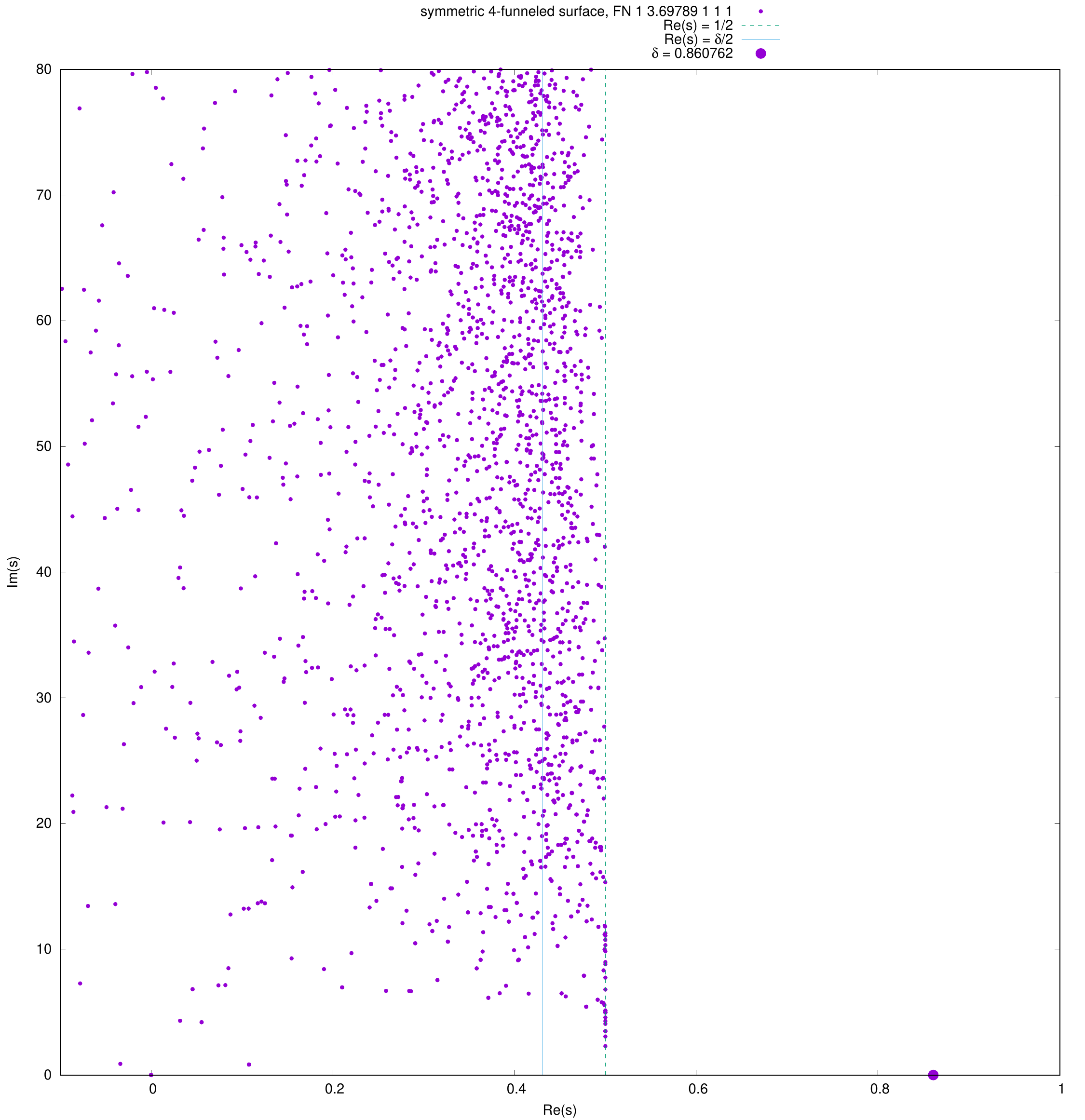}}
  \caption{Spectra of symmetric $n$-funnel surfaces. Left: Surface
    with 6 funnels all of width 10. Right: Surface with 4 funnels all
    of width 1. }\label{fig:nfunnel}
\end{figure}

Our construction has the advantage, that the size difference between
disks is not too pronounced and that the whole arrangement is more symmetric.
This makes it better suited for the Langrange--Chebyshev approach.

In Figure~\ref{fig:nfunnel} we show resonance spectra for symmetric
$n$-funnel surfaces. The data on the left is for a surface with 6
funnels all of width 10. Its construction requires 5 generators and
their inverses, corresponding to 5 pairs of disks. The data on the
right is for a surface with 4 funnels, but their width is only 1. For
such narrow funnels the Hausdorff dimension of the limit set is
approaching a value close to one, $\delta=0.86076$ in this case. The
data clearly shows that there are no resonances with $\Ima s \ne 0$ and
$\Rea s> 1/2$. Instead, some resonances accumulate close to the line
$\Rea s = 1/2$, indicated by the green dashed line. The data also seems
to support conjectures that the density of resonances has a
maximum near $\Rea s = \delta/2$ indicated by the blue solid line in the
figure~\cite{Jakobson_Naud}.

We note that small funnel widths or large $\delta$ are situations where
domain-refinement becomes important. The surface with 6 funnels of
width 10 was simulated with a subdivision level of $n=1$ only, and could
presumably be solved without domain refinement. The 4-funnel surface
with narrow funnels, on the other hand, required a subdivision level
of $n=3$. We will discuss such convergence properties in more detail in
a forthcoming article.

\subsection{Funneled tori}

\begin{figure}
  {\centering\includegraphics[width=\linewidth]{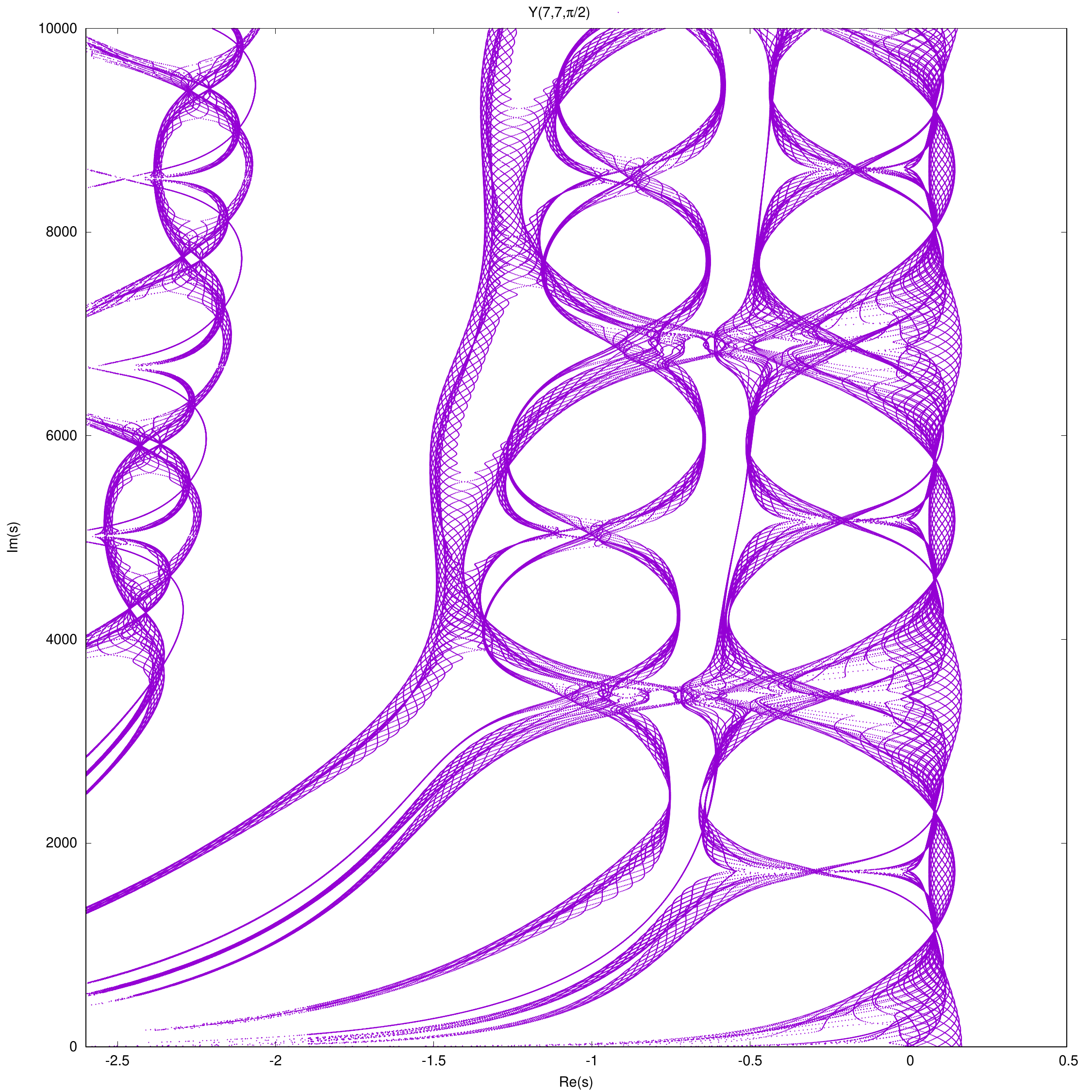}}
  \caption{Resonance spectrum for the funneled torus $Y(7,7,\pi/2)$.}\label{fig:Y772}
\end{figure}

As examples of surfaces of genus 1, we reconsider tori with one attached funnel. These were
also first studied by Borthwick~\cite{Borthwick_experimental} and constructed from the two
generators
\begin{equation}
  \begin{aligned}
    \tilde S_1 & = \begin{pmatrix}
      e^{\ell_1/2} & 0\\
      0 & e^{-\ell_1/2}
    \end{pmatrix}
    \,,\\
    \tilde S_2 & = \begin{pmatrix}
      \cosh(\ell_2/2)-\cos(\phi)\sinh(\ell_2/2) & \sin^2(\phi)\sinh(\ell_2/2) \\
      \sinh(\ell_2/2) &  \cosh(\ell_2/2)+\cos(\phi)\sinh(\ell_2/2)
    \end{pmatrix}\,.
  \end{aligned}
\end{equation}
For our algorithm this choice is not usable, since one of the
corresponding disks includes the point at infinity. We can avoid this
problem by rotating the above generators by $\pi/8$, i.e., we
use
\begin{equation}
  S_1 = R(\tfrac{\pi}{8})\, \tilde S_1\, R(-\tfrac{\pi}{8})\,,\quad
  S_2 = R(\tfrac{\pi}{8})\, \tilde S_2\, R(-\tfrac{\pi}{8})\,,\quad
  R(\psi) = \begin{pmatrix}
    \cos(\psi) & -\sin(\psi)\\
    \sin(\psi) & \cos(\psi)
  \end{pmatrix}\,.
\end{equation}
Our convention $S'(z) = 1$ then results in disks $D_{\pm 1}$,
$D_{\pm 2}$, which for $\phi=\pi/2$ have equal size, and small size
differences otherwise. In case that the $\ell$'s are too small, this
construction can lead to invalid, overlapping disks, and some other
choice is required. Following Borthwick, we denote such surfaces by
$Y(\ell_1,\ell_2,\phi)$. In Figure~\ref{fig:funtorus} we illustrate
the construction of $Y(4,4,\pi/2)$.

The resonance spectrum of funneled tori is more delicate with many
fine details, in particular, when the surface is symmetric. In
Figure~\ref{fig:Y772} we show the spectrum of $Y(7,7,\pi/2)$. Compared
to previous studies, we were able to increase the range of the spectral
parameter to $\Rea s>-2.6$.

\section{Algorithms}

In this section we list the algorithms for the calculation of resonances.

\begin{algorithm}[H]
  \caption{Construct $N\ttimes N$ sub-block $\TO_s^{(N)}$ of full transfer operator.}\label{alg:TO_block}
  \begin{algorithmic}[1]
    \Function{LBlock}{$N,I_v,I_w,g$}
    \State $c_v,r_v \gets \text{center \& radius of } I_v$
    \State $c_w,r_w \gets \text{center \& radius of } I_w$
    \For{$i=1,\dots,N$}
    \State $ x_i\gets \cos\big(\frac{\pi}{N}(i - \frac{1}{2})\big)$
    \EndFor
    \For{$i=1,\dots,N$}
    \For{$j=1,\dots,N$}
    \State $\begin{aligned}  m_{ij} & \gets K_N[(g(c_v+r_v x_i) - c_w)/r_w, x_j]\\
      f_i & \gets g'(c_v+r_v x_i)
    \end{aligned}$
    \EndFor
    \EndFor
    \State \textbf{return} $[f,m]$
    \Comment{$f:$ $N$-vector, $m:$ $N\ttimes N$-matrix}
    \EndFunction
  \end{algorithmic}
\end{algorithm}

\clearpage

\begin{algorithm}[H]
  \caption{Construct index set $\mc W_n$.}\label{alg:indexset}
  \begin{algorithmic}[1]
    \Function{IndexSet}{$q,n$}
    \State $I_G \gets [-q,\dots,-1,1,\dots,q]$
    \State $r\gets \begin{cases}
      1 & n<2\\ -1 & \text{otherwise}
    \end{cases}$
    \If{$n=0$}
    \State $\mc W_0\gets [\, [k] \mid k\in I_G]$
    \Else
    \State $\mc W_n\gets [\,]$
    \For{$w\in\mc W_{n-1}=\Call{IndexSet}{q,n-1}$}
    \For{$k\in I_G$}
    \If{$r\cdot k\ne \Call{First}{w}$}
    \State $\tilde w \gets \Call{Prepend}{w, k}$
    \State $\mc W_n \gets \Call{Append}{\mc W_n, \tilde w}$
    \EndIf
    \EndFor
    \EndFor
    \EndIf
    \State \textbf{return} $\mc W_n$
    \Comment{$2q(2q-1)^n$-element array of $(n+1)$-element index sets.}
    \EndFunction
  \end{algorithmic}
\end{algorithm}

\begin{algorithm}[H]
  \caption{Build domain-refined transfer operator $\TO_s^{(N)}$.}\label{alg:TO_subdiv}
  \begin{algorithmic}[1]
    \Statex \Function{LParts}{$N,q,n,[D_{-q},\dots,D_{-1},D_1,\dots,D_q]$}
    \State $I_G \gets [-q,\dots,-1,1,\dots,q]$
    \State $S_k \gets \text{generator mapping outside of }D_k\text{ to inside of }D_{-k}\,\forall k\in I_G$
    \State $I_k \gets D_k\cap\mathbb{R}\,\forall k\in I_G$
    \State $\mc W_n \gets \Call{IndexSet}{q,n}$
    \For{$v=(v_1,\dots,v_n,\ell_v)\in \mc W_n$}
    \For{$w=(w_1,\dots,w_n,\ell_w)\in \mc W_n$}
    \If{$w=(w_1,v_1,\dots,v_{n-1},-v_n)$}
    \State $I_v\gets S_{v_1}\cdots S_{v_n}.I_{\ell_v}$
    \State $I_w\gets S_{w_1}\cdots S_{w_n}.I_{\ell_w}$
    \State $[F_{vw},M_{vw}] \gets \Call{LBlock}{n,I_v,I_w,S_{-w_1}}$
    \Else
    \State $[F_{vw},M_{vw}] \gets [1,0]$
    \EndIf
    \EndFor
    \EndFor
    \State \textbf{return} $[F,M]$
    \Comment {\parbox{0.5\linewidth}{$F:$ $d\ttimes d$ array of $N$-vectors\\
        $M:$ $d\ttimes d$ array of $N\ttimes N$-matrices\\
        $d = \dim(\mc W_n) = 2q(2q-1)^n$}}
    \EndFunction
  \end{algorithmic}
\end{algorithm}

\begin{algorithm}[H]
  \caption{Evaluate $Z(s) = \det(1-\TO_s^{(N)})$ using pre-calculated parts.}\label{alg:zeta_of_s}
  \begin{algorithmic}[1]
    \Function{Z}{$s,N,q,n,[F,M]$}
    \State $\mc W_n \gets \Call{IndexSet}{q,n}$
    \State $d\gets \dim(\mc W_n)$
    \For{$v\in \mc W_n$}
    \For{$w\in \mc W_n$}
    \State $L_{vw} \gets M_{vw}\diag(F_{vw}^{s})$
    \Comment {\parbox{0.5\linewidth}{$F_{ij}$ is $N$-vector; exponentiate elements\\
        $L_{vw}, M_{vw}$ are $N\ttimes N$ matrices}}
    \EndFor
    \EndFor
    \State \textbf{return} $\det\left(1 - \begin{bmatrix}
        L_{11} & \cdots & L_{1d}\\
        \vdots & & \vdots\\
        L_{d1} & \cdots & L_{dd}
      \end{bmatrix}
    \right)$
    \Comment{\parbox{0.3\linewidth}{call library for $\det()$\\
        ($LU$ factorization)}}
    \EndFunction
  \end{algorithmic}
\end{algorithm}

\section*{Acknowledgements} 

OFB was supported by the EPSRC grant EP/R012008/1.
AP acknowledges support by the DFG grants PO~1483/2-1 and PO~1483/2-2 and she wishes to thank the Max Planck Institute for Mathematics in Bonn for excellent working conditions during large parts of the research periods for this project. Further, OFB and AP wish to thank the Hausdorff Institute for Mathematics in Bonn for financial support and excellent working conditions during the HIM trimester program ``Dynamics: Topology and Numbers,'' where they prepared parts of this manuscript.

\bibliography{resonances}
\bibliographystyle{resonances2}

\setlength{\parindent}{0pt}

\end{document}